\numberwithin{figure}{section}
\definecolor{my-blue}{rgb}{0.0,0.0,0.6}
\definecolor{my-red}{rgb}{0.5,0.0,0.0}
\definecolor{my-green}{rgb}{0.0,0.5,0.0}
\definecolor{nicos-red}{rgb}{0.75,0.0,0.0}
\newtheorem{theorem}{\sc Theorem}[section]
\newtheorem{lemma}[theorem]{\sc Lemma}
\newtheorem{corollary}[theorem]{\sc Corollary}
\newtheorem{assumption}[theorem]{\sc Assumption}
\numberwithin{equation}{section}
\theoremstyle{remark}
\newtheorem{remark}[theorem]{Remark}
\newtheorem{example}[theorem]{Example}
\newcommand{\be}{\begin{equation}}
\newcommand{\ee}{\end{equation}}
\newcommand{\beq}{\begin{equation}}
\newcommand{\eeq}{\end{equation}}
\newcommand{\nn}{\nonumber}
\providecommand{\abs}[1]{\vert#1\vert}
\newcommand{\fl}[1]{\lfloor{#1}\rfloor}
\def\cM{\mathcal{M}}
\def\cS{\mathcal{S}} 
\def\cK{\mathcal{K}}
\def\bE{\mathbb{E}} 
\def\bP{\mathbb{P}}
\def\bR{\mathbb{R}}
\def\bZ{\mathbb{Z}}
\def\Z{\bZ}  \def\R{\bR}
\def\e{\varepsilon}
\def\ind{\mathbf{1}}
\def\m1{\mathbf{1}}
\def\Vvv{{\rm\mathbb{V}ar}}
  \def\wh{\widehat} \def\wb{\overline} 
\def\E{\bE}
\def\P{\bP} 
\def\funct lp{L} 
\def\funct lpbar{\bar L} 
\definecolor{darkgreen}{rgb}{0.0,0.5,0.0}
\definecolor{darkblue}{rgb}{0.0,0.0,0.3}
\definecolor{nicosred}{rgb}{0.65,0.1,0.1}
\definecolor{light-gray}{gray}{0.7}
\def\cif1{v}   
\def\deq{\overset{d}=}
\newcommand\bbullet{{\raisebox{0.5pt}{\scaleobj{0.6}{\bullet}}}} 
\def\tsp{\hspace{0.55pt}}  
\def\tspa{\hspace{0.7pt}}  
\def\tspb{\hspace{0.9pt}}
  \def\wh{\widehat} \def\wb{\overline}
 \def\rnder{f}   
\def\cv{{\sigma^2_*}} 
\def\cva{\sigma_*} 
\def\cm{c_M} 
\def\Cm{C_M} 
\def\frq{\theta_1} 
\def\cmu{D_\mu} 
\newcounter{usedm}
\newcounter{usedn}
\newcommand*{\lppwl}[6]{
	
	\FPeval{\m}{round(#5*(#3-#1)/(#3+#4-#1-#2),0)}
	
	\FPeval{\n}{round(#5*(#4-#2)/(#3+#4-#1-#2),0)}
	
	\FPeval{\stlength}{(#3+#4-#1-#2)/#5}
	
	\setcounter{usedm}{0}
	\setcounter{usedn}{0}
	
	\foreach\i in{1,...,{#5}}{
		\FPrandom{\x}
		
		\FPeval{\y}{(\m-\theusedm)/(\m+\n-\theusedm-\theusedn)}
		
		\FPeval{\startx}{#1+(\theusedm)*\stlength}
		\FPeval{\starty}{#2+(\theusedn)*\stlength}
		
		\ifthenelse{\lengthtest{\x pt > \y pt}}{
			\FPset{\endx}{\startx}
			\FPadd{\endy}{\starty}{\stlength}
			\stepcounter{usedn}
		}
		{
			\FPadd{\endx}{\startx}{\stlength}
			\FPset{\endy}{\starty}
			\stepcounter{usedm}
		}
		\draw[#6](\startx,\starty) -- (\endx,\endy);
	};
}
\begin{document}
	\title[Running maximum]{Bound on the running maximum of a random walk with small drift}

	\author[O.~Busani]{Ofer Busani}
	\address{Ofer Busani\\ University of Bristol\\  School of Mathematics\\ Fry Building\\ Woodland Rd.\\   Bristol BS8 1UG\\ UK.}
	\email{o.busani@bristol.ac.uk}
	\urladdr{https://people.maths.bris.ac.uk/~di18476/}
	\thanks{O. Busani was supported by EPSRC's EP/R021449/1 Standard Grant.} 
	
	\author[T.~Sepp\"al\"ainen]{Timo Sepp\"al\"ainen}
	\address{Timo Sepp\"al\"ainen\\ University of Wisconsin-Madison\\  Mathematics Department\\ Van Vleck Hall\\ 480 Lincoln Dr.\\   Madison WI 53706-1388\\ USA.}
	\email{seppalai@math.wisc.edu}
	\urladdr{http://www.math.wisc.edu/~seppalai}
	\thanks{T.\ Sepp\"al\"ainen was partially supported by  National Science Foundation grant DMS-1854619 and by the Wisconsin Alumni Research Foundation.}

	\keywords{}
	\subjclass[2000]{60K35, 60K37} 
	\date{\today}
	\begin{abstract} 
	We derive a lower bound for the probability that a random walk with i.i.d.\ increments and small negative drift $\mu$ exceeds  the value $x>0$ by time $N$. When the moment generating functions are bounded in an interval around the origin,   this probability can be bounded below by $1-O(x\abs\mu\log N)$.	The approach is elementary and does not use strong approximation theorems.  
	\end{abstract}
	\maketitle
	\tableofcontents
	\section{Introduction}
	
	\subsection{Background} 
This paper arose from the need of a random walk estimate for the authors' article \cite{busa-sepp-poly} on directed polymers.  This estimate is a {\it positive} lower bound on the running maximum of a random walk with a small {\it negative} drift.  Importantly, the bound had to come with sufficient control over its constants so that it would apply to an infinite sequence of random walks whose drift scales to zero as the maximum  is taken over expanding  time intervals. The natural approach via a Brownian motion embedding appeared to not give either the desired precision or the uniformity.  Hence we resorted to a proof from scratch.   For possible wider use we derive the result here under general hypotheses on the distribution of the step of the walk. 

The polymer application of the result	pertains to the exactly solvable log-gamma polymer on the plane.  The objective of \cite{busa-sepp-poly} is to prove that there are no bi-infinite polymer paths on the planar lattice $\Z^2$.  The technical result is that there does not exist any nontrivial  Gibbs measures on bi-infinite paths that satisfy the Dobrushin-Lanford-Ruelle (DLR) equations under the Gibbsian specification defined by the quenched polymer measures.  In  terms of limits of finite polymer distributions,  this means that as the southwest and northeast  endpoints of a random polymer path are taken to opposite infinities, the   middle portion of the path  escapes.  This is proved by showing that in the limit  the probability that the  path   crosses the $y$-axis along a given edge   decays to zero.  This probability in turn is controlled by considering stationary polymer processes from the two endpoints to an interval along the $y$-axis.  The crossing probability can be controlled in terms of a maximum of a random walk.  In the case of the log-gamma polymer,  the steps of this random walk are distributed as the difference of two independent log-gamma variables.   The case needed for \cite{busa-sepp-poly} is treated in Example \ref{ex:drw2} below.

	\subsection{The question considered} 
	   We seek a lower bound on the probability that the running maximum of a  random walk  with negative drift reaches a level $x>0$.  To set the stage, we discuss the matter through Brownian motion.   Let $S^N_n=\sum_{i=1}^n X^N_i$ be a   random walk with drift  $\E(X^N_1)=\mu_N=\mu N^{-1/2}<0$, and such that the random walks $S^N$ converge weakly to a  Brownian motion with drift $\mu<0$. The probability of the event
	\begin{align*}
		\sup_{1\leq n \leq N}S^N_n>x
	\end{align*}
	should be approximately the same  as that of
	\begin{align}\label{bwd}
		\sup_{0\leq s \le 1} (B_s+s\mu) > xN^{-1/2}. 
	\end{align} 
	This latter can   be computed   (see \eqref{kmt612}) to be
	\begin{align}\label{bc}
		\P\big\{\sup_{0\leq s \leq  1} (B_s+s\mu ) > xN^{-1/2}\tspa\big\}=1-O(\abs\mu xN^{-1/2}\tspa).
	\end{align}
	This suggests that we should aim for an estimate of the type 
	\begin{align}\label{rwc}
		\P\Big(\sup_{1\leq n \leq N}S^N_n>x\Big)\ge 1-O(\abs{\mu_N} x ).  
	\end{align}
	To reach this precision weak convergence is not powerful enough, for a weak approximation of random walk by Brownian motion reaches only a precision of $O(N^{-1/4})$ \cite{sawyer1972rates,fraser1973rate}.  
	Our estimate \eqref{mr} below does almost capture \eqref{rwc}: we have to allow   an additional $\log N$ factor inside the $O(\cdot)$ and consider $x$ of order at least $(\log N)^2$.  
	
	
	The by-now  classical  Koml\'os-Major-Tusn\'ady (KMT)  coupling \cite{koml-majo-tusn-76}  gives  a strong approximation of random walk with Brownian motion with a  discrepancy that grows  logarithmically in time.  This precision is sufficient for us, as we illustrate in Section \ref{sec:kmt}.   The problem is now the control of the constants in the approximation.  
	Uniformity of the constants is necessary for our application in \cite{busa-sepp-poly}.  But verifying this uniformity from the original work  \cite{koml-majo-tusn-76}  appeared to be a highly  nontrivial task. In the end it was more efficient to derive the estimate (Theorem \ref{thm:lm} below)  from the ground up. 
	
The difficulty of the original KMT proof has motivated several recent attempts at simplification and better understanding of the result, such as Bhattacharjee and Goldstein \cite{bhattacharjee2016strong}, 
	  Chatterjee \cite{chatterjee2012new}, and Krishnapur \cite{krishnapur2020one}.  There is another strong approximation result due to Sakhanenko \cite{sakhanenko1984rate} which, according to p.~232 of \cite{chatterjee2012new},  ``is so complex that some researchers are hesitant to use it''.

\subsection{Sketch of the proof} 
Our proof is elementary. The most sophisticated result used is the Berry-Esseen theorem.  
Given a random walk of small drift $\mu<0$, our approach  can be summarized in two main steps:
	\begin{enumerate}
		\item Up to the time the random walk hits the level $-\e \abs\mu^{-1}$ it behaves like an unbiased random walk.
		\item By the time the random walk hits the level $-\e\abs\mu^{-1}$ it will have had about $\log_2(\e|\mu|^{-1}x^{-1} )$ independent opportunities to hit the level $x$.  By the previous step this implies that the probability on the left-hand side of \eqref{rwc} is of order $1-(1/2)^{\log_2(\e|\mu|^{-1}x^{-1} )}=1-O(|\mu| \e^{-1} x )$.
	\end{enumerate}
As we will take $\e=(\log N)^{-1}$ in the proof, we will obtain the right order in \eqref{rwc} up to a logarithmic factor (Theorem \ref{thm:lm}).  After the statement of the theorem we illustrate it with examples.  Then we demonstrate that even if we knew that the constants in the KMT approximation can be taken uniform, the result would not be essentially stronger in the regime in which we apply our result. 


	\section{Main result} \label{sec:mr}
	For each $N\in\Z_{>0}$, let $\{X^N_i\}_{i\geq 1}$ be a sequence   of non-degenerate   i.i.d.\ random variables.
	Denote their moment generating function by 
	\begin{align*}
		M_N(\theta)&=\E\big(e^{\theta X^N_1}\big).
	\end{align*}
Write   $M_N^{(0)}=M_N$ and $M_N^{(i+1)}=(d/d\theta)M_N^{(i)}$.

	 \begin{assumption}\label{ass:rw}
	 	\begin{enumerate} [{\rm(i)}] \itemsep=3pt 
	 		We assume that the random variables $\{X^N_i\}_{i\geq 1}$ satisfy the following:
	 		\item   There exists an open interval $(-\theta_0, \theta_0)$ around the origin  on which each moment generating function $M_N$ is finite.  Furthermore, there exists a finite   constant  
	  $C_M$ and $\frq >0$  such that  we have the uniform  bounds 
	 		\begin{align}
	 		|M_N^{(i)}(\theta)|&\leq \Cm  \quad \text{for all $N$, $0\leq i\leq 3$, and $\theta\in[-\frq ,\frq ]$}
			\label{ed} 
			\end{align} 
for the compact interval $[-\frq ,\frq ]\subset(-\theta_0, \theta_0)$. 			
	 		
	 		\item There exists a finite constant $\cva >0$ such that 
	 		\begin{align}\label{cv}
	 			\E\big[(X^N_1)^2\big]\ge \sigma_N^2= \Vvv(X^N_1) \ge   \cv  \quad \text{for all $N$.} 
	 		\end{align}
	 		\item There exists a finite constant $\cmu>0$ such that  the expectations $\mu_N=\E\big(X^N_1\big)$
			   satisfy
	 		\begin{align}\label{r}
	 			-\cmu(\log N)^{-3} \leq \mu_N \le 0  \quad \text{for all $N$.}  
	 		\end{align}
	 	\end{enumerate}	 
	 \end{assumption}

	The conditions in Assumption \ref{ass:rw} are fairly natural.  Note that \eqref{ed} has to be checked only for $i=0$ at the expense of shrinking the interval $[-\frq, \frq]$ and increasing $\Cm$. 
To make a positive maximum possible, 
 condition \eqref{cv} ensures enough diffusivity  and condition \eqref{r}  limits the strength of the negative drift.  The bound  \eqref{mr} below shows that $\cmu$ has to be vanishingly small in order for the result to be nontrivial. 
		
	For $m\ge 1$ let $S^N_m=\sum_{k=1}^{m}X^N_k$  be   the random walk   associated with the steps  $\{X^N_i\}_{i
	\geq 1}$.   Here is the main theorem.

\begin{theorem}\label{thm:lm}
	There exist finite constants $C$ and $N_0$ that depend on $\frq ,\cv, \cmu $ and $\Cm  $ such that, for every $N\ge N_0$ and $x\ge(\log N)^2 $, 
	\begin{align}\label{mr}
	\mathbb{P}\Big(\max_{1\leq m \leq N}S^N_m\le x\Big)\leq C\tspa x(\log N)(\tspa |\mu_N|\vee N^{-1/2}\tspa ). 
	\end{align}
\end{theorem}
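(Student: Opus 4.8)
The plan is to realize the two‑step heuristic of the introduction by a nested sequence of gambler's‑ruin ``attempts'' whose target levels double, and then to dispose separately of the possibility that these attempts overrun the allotted $N$ steps. Write $\e=(\log N)^{-1}$, $\nu_N=|\mu_N|\vee N^{-1/2}$, and $R=\e/\nu_N$. Since $x(\log N)\nu_N=x/R$, we may assume $x<R$ (otherwise the right side of \eqref{mr} is $\ge 1$), and then $x<R\le(\log N)^{-1}N^{1/2}$. Put $Y_{-1}=0$ and, for $k\ge0$, let attempt $k$ run the walk from its position $Y_{k-1}$ at the end of attempt $k-1$ until it first exceeds level $x$ (``success'') or first reaches level $2Y_{k-1}-x$ (``failure''), in which latter case $Y_k$ is its position at that stopping time $\tau_k$. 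The interval $(2Y_{k-1}-x,\,x]$ of attempt $k$ is symmetric about $Y_{k-1}$ with half‑width $h_k:=x-Y_{k-1}$, and a failure gives $h_{k+1}=2h_k+V_k$ with $V_k\ge0$ the undershoot; so the half‑widths at least double, the walk lies below $-(2^{k+1}-1)x$ after $k$ failures, and $K_0:=\lceil\log_2(R/x)\rceil$ failures bring it to depth of order $R$. (One actually targets $x+1$ rather than $x$ to handle $\le$ versus $<$; since $x\ge(\log N)^2$ this only affects constants.)

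The first ingredient is a gambler's‑ruin estimate: conditionally on the history up to the start of attempt $k$, on $\{h_k\le 2R\}$ this attempt succeeds with probability at least $\tfrac12-C/h_k-C|\mu_N|h_k/\cv$. I would obtain this from Wald's identity $\E[S^N_{\tau_k}]=Y_{k-1}+\mu_N\E[\tau_k]$, the exit‑time bound $\E[\tau_k]\le Ch_k^2/\cv$ (standard, from \eqref{cv} and the martingale $m\mapsto(S^N_m-\mu_N m)^2-\sigma_N^2m$), and the fact that the boundary overshoots — of order $1$ because \eqref{ed} gives finite exponential moments — move $\E[S^N_{\tau_k}]$ by only $O(1)$; dividing by the interval length $2h_k$ gives the claim. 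Because $h_k\ge2^kx$, summing the $C/h_k$ over $k$ gives $O(1/x)$; and on the overwhelmingly likely event $G$ that all undershoots $V_0,\dots,V_{K_0-1}$ are at most $(\log N)^2$, one has $h_k\le2^{k+1}x\le2R$ for $k<K_0$, so summing $C|\mu_N|h_k/\cv$ gives $O(|\mu_N|R/\cv)=O(\e/\cv)$, using $|\mu_N|R\le\e$. Hence the probability that attempts $0,\dots,K_0-1$ all fail is at most $2^{-K_0}e^{o(1)}+\P(G^c)\le 2x/R+C(\log N)e^{-\frq(\log N)^2}$.

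The second ingredient handles the time budget. If $\max_{1\le m\le N}S^N_m\le x$ then no attempt succeeds within time $N$, so either all of attempts $0,\dots,K_0-1$ finish (as failures) by time $N$, or some attempt $j<K_0$ is still running at time $N$; in the latter case, since the failure levels and half‑widths are nested, the whole path $S^N_0,\dots,S^N_N$ stays in an interval of width $2h_j$, which is $\le 4R$ on $G$. So it suffices to bound the probability that the walk's range on $[0,N]$ is at most $4R$. Splitting $[0,N]$ into $\lfloor N/\ell\rfloor$ blocks of length $\ell\asymp(R/\cva)^2$ and applying the Berry--Esseen theorem together with \eqref{cv}, each block increment exceeds $4R$ in absolute value with probability bounded below — here the drift shifts a block increment by only $|\mu_N|\ell\lesssim|\mu_N|R^2/\cv\le\e R/\cv=o(R)$, which is exactly where \eqref{r} and the choice $\e=(\log N)^{-1}$ enter. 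Since $N/\ell\gtrsim\cv(\log N)^2$, this probability of small range is at most $\exp(-c(\log N)^2)$. Combining the two ingredients,
\begin{align*}
\P\Big(\max_{1\le m\le N}S^N_m\le x\Big)&\le\frac{2x}{R}+\exp\!\big(-c(\log N)^2\big)+C(\log N)e^{-\frq(\log N)^2}\\
&\le\frac{Cx}{R}=Cx(\log N)\nu_N
\end{align*}
for $N\ge N_0$, the last two terms being negligible next to $x(\log N)\nu_N\ge(\log N)^3N^{-1/2}$.

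I expect the delicate point to be the gambler's‑ruin bookkeeping: it is not enough that each attempt succeed with some probability bounded away from $0$ — a per‑attempt success probability of, say, $\tfrac13$ would give only the far weaker bound $(x/R)^{\log_2(3/2)}$ — one needs the per‑attempt success probability to be within $o(1)$ of $\tfrac12$, and one needs the resulting errors, together with the overshoot corrections, to stay summable across the $\Theta(\log N)$ attempts. The symmetric‑interval design is precisely what keeps the drift correction at $O(|\mu_N|h_k)$, so that its telescoped sum is $O(|\mu_N|R)=O(\e)=o(1)$; controlling the undershoots $V_k$ (so they contribute $O(1)$ per attempt and do not spoil the doubling, and so that conditioning on $G$ can be inserted into the per‑step estimates) is the accompanying technical nuisance.
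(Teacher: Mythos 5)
Your global architecture coincides with the paper's: a nested sequence of $K\asymp\log_2\bigl(x^{-1}(\log N)^{-1}H_N^{1/2}\bigr)$ doubling gambler's-ruin attempts, each succeeding with probability $\tfrac12-o(1)$ where the $o(1)$'s are summable over the attempts, multiplied together to give $2^{-K}e^{o(1)}\asymp x(\log N)(|\mu_N|\vee N^{-1/2})$, plus a separate Berry--Esseen block argument showing the walk exits the cylinder of radius $(\log N)^{-1}H_N^{1/2}$ well before time $N$ (the paper's Lemma \ref{lem:ubc1} is exactly your range estimate). Where you genuinely diverge is in the proof of the per-attempt estimate (the paper's Lemma \ref{lem:ep} and Corollary \ref{cor:plb}). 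The paper tilts the law by $e^{\theta_0^N S_n}$ at the minimizer of the MGF so that the walk becomes an exact martingale under $Q$, applies optional stopping there, and then transfers back to $\P$ via Cauchy--Schwarz and a careful bound on $\E[(\rnder^{\theta_0}_{\hat\tau_y})^2]$; it also truncates the steps at $-x$ so that undershoots are controlled deterministically rather than through a conditioning event like your $G$. You instead apply Wald's identity directly to the drifted walk, so the drift's damage is quantified as $|\mu_N|\E[\tau_k]/(2h_k)\le C|\mu_N|h_k/\cv$, telescoping to $O(|\mu_N|R)=O(\e)$. Your route is shorter and dispenses with the entire tilting apparatus (\eqref{theta}--\eqref{vq} and the decomposition \eqref{plb}); its price is that you must separately justify $\E[\tau_k]<\infty$ and the quantitative exit-time bound for a walk that is \emph{not} a martingale, which is precisely what the tilt buys the paper for free.

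That exit-time bound is the one step of yours that does not work as written. Optional stopping on $m\mapsto(S_m-\mu_N m)^2-\sigma_N^2m$ gives $\sigma_N^2\,\E[\tau\wedge n]=\E\bigl[(S_{\tau\wedge n}-\mu_N(\tau\wedge n))^2\bigr]$, and since the stopping rule constrains $S$, not the centered walk, bounding the right-hand side requires controlling $\mu_N^2\,\E[(\tau\wedge n)^2]$ --- which is circular. The conclusion $\E[\tau_k]\le Ch_k^2/\cv$ is nevertheless true and follows from the blocking argument you already use for the range estimate: splitting time into blocks of length $\asymp h_k^2$ and applying Berry--Esseen with \eqref{cv} gives $\P(\tau_k>jh_k^2)\le e^{-cj}$ (this is the paper's Lemma \ref{lem:ubc1}), hence geometric tails and all moments of $\tau_k/h_k^2$. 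Two smaller points to tighten: the overshoot at the upper boundary is $O(\log H_N)$ by the crude maximum bound under \eqref{ed} (the $O(1)$ claim needs uniform ladder-height control), but $O(\log N)/h_k$ with $h_k\ge x\ge(\log N)^2$ still sums to $o(1)$; and the conditioning on $G$ must be inserted attempt-by-attempt (bounding $\P(\mathrm{fail}_k,\,V_k\le(\log N)^2\mid\text{past, past undershoots small})$) so that the hypothesis $h_k\le 2R$ is available when the per-attempt bound is invoked --- the paper's truncation of the steps at $-x$ is the cleaner way to make the same point rigorous.
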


\medskip 

\begin{remark}[The constants in the theorem]   The constant $C$ in the upper bound \eqref{mr} is given by 
  \be\label{CC}  C=4\exp\bigl\{ 4(C_0+4c_\tau^{-1})(1+\cmu) + 8\frq^{-1}  \bigl(1+ \log(e^{\frq }\Cm +1)\bigr)     +4 \Cm \bigr\} + 3 \ee 
 where 
  \be\label{CC1}
	C_0=2(\cva^{-1}+9e^{\frq}\cva^{-3}\Cm^{\tspa 5/2}  ) + 2e^{8\Cm \cva^{-4}+8\cva^{-2} }c_\tau^{-1}+12\cva^{-2}\,, 
	\ee
	  \be\label{CC2}  c_\tau=\log\frac2{1+\Phi_{\cv } [-2,2]}\,, \ee
and 	$\Phi_{\cv }$ is the mean zero  Gaussian distribution with variance $\cv$. 
	
 
 Throughout the proof we state explicitly  
 the various conditions  $N\ge N_0$ required along the way.  Let us assume that $N\ge 2$ so that $\log N$  does not vanish.  Then all the  conditions $N\ge N_0$  can be combined into a single condition of the form  
\be\label{fN_0}   f(\Cm, \cmu,  \cv, \frq, N) \ge 1  \ee 
where the function 
  $f$ is a strictly positive continuous function on $\R_{>0}^4\times\R_{\ge2}$, nondecreasing in  $\frq$,   nonincreasing in $\Cm$ and $\cmu$, but depends on $\cv$ in both directions.  When $(\Cm, \cmu,  \cv, \frq)$  is restricted to a compact subset $\cK$ of $\R_{>0}^4$, there exists a finite index $N_\cK$ such that $f(\Cm, \cmu,  \cv, \frq, N)$ is a nondecreasing function of $N\ge N_\cK$ for any fixed $(\Cm, \cmu,  \cv, \frq)\in \cK$, and 
  \[  \lim_{N\to\infty}\; \inf_{(\Cm, \cmu,  \cv, \frq)\tsp \in\tsp \cK}  f(\Cm, \cmu,  \cv, \frq, N)  = \infty. \] 
  
  In particular, for each compact subset $\cK\subset\R_{>0}^4$ there exists a finite index $N_{0,\cK}$ such that  \eqref{fN_0} holds 
   for all $N\ge N_{0,\cK}$ and all $(\Cm, \cmu,  \cv, \frq)\in \cK$. 
   Furthermore, it is evident from \eqref{CC}-\eqref{CC2} that $C$ is a continuous function of $(\Cm, \cmu,  \cv, \frq)\in\R_{>0}^4$.   We conclude with the following local uniformity statement.

\end{remark} 

   \begin{corollary} \label{cor:lm} 
  For each compact subset $\cK\subset\R_{>0}^4$  there exist finite constants $C_{0,\cK}$ and  $N_{0,\cK}$ such that the following holds:   the estimate \eqref{mr} with $C=C_{0,\cK}$ on the right-hand side  is valid  whenever  $N\ge N_{0,\cK}$,  simultaneously  for all walks $\{S^N_m\}_{m\ge 1}$ that satisfy Assumption \ref{ass:rw}  with  parameters  $(\Cm, \cmu,  \cv, \frq)\in \cK$. 
 \end{corollary}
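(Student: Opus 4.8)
The plan is to read off Corollary~\ref{cor:lm} from Theorem~\ref{thm:lm} together with the two structural facts established in the remark following the theorem. Recall from that remark that: (a) all of the hypotheses ``$N \ge N_0$'' encountered in the proof of Theorem~\ref{thm:lm} can be combined into the single inequality \eqref{fN_0}, and for each compact $\cK \subset \R_{>0}^4$ there is a finite index $N_{0,\cK}$ such that \eqref{fN_0} holds for every $N \ge N_{0,\cK}$ and every $(\Cm,\cmu,\cv,\frq) \in \cK$; and (b) the constant $C = C(\Cm,\cmu,\cv,\frq)$ appearing in \eqref{mr}, written explicitly in \eqref{CC}--\eqref{CC2}, is a continuous function of $(\Cm,\cmu,\cv,\frq) \in \R_{>0}^4$.

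First I would fix the compact set $\cK$ and take $N_{0,\cK}$ as provided by fact~(a). For any $N \ge N_{0,\cK}$ and any walk $\{S^N_m\}_{m \ge 1}$ satisfying Assumption~\ref{ass:rw} with parameters $(\Cm,\cmu,\cv,\frq) \in \cK$, the inequality \eqref{fN_0} holds, so every threshold condition required along the way in the proof of Theorem~\ref{thm:lm} is met. Hence Theorem~\ref{thm:lm} applies to this walk and yields \eqref{mr} with that walk's own constant $C = C(\Cm,\cmu,\cv,\frq)$, for all $x \ge (\log N)^2$.

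Next I would make the constant uniform over $\cK$. By fact~(b), $C(\cdot)$ is continuous on $\R_{>0}^4$, and $\cK$ is compact, so
\[
C_{0,\cK} := \max_{(\Cm,\cmu,\cv,\frq) \in \cK} C(\Cm,\cmu,\cv,\frq) < \infty .
\]
Since the right-hand side of \eqref{mr} is nondecreasing in $C$, we may replace $C(\Cm,\cmu,\cv,\frq)$ by $C_{0,\cK}$ without invalidating the bound. Therefore \eqref{mr} with $C = C_{0,\cK}$ holds for all $N \ge N_{0,\cK}$, simultaneously for every walk with parameters in $\cK$, which is precisely the assertion of the corollary.

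Given the remark, this argument is essentially immediate; the substantive work lies upstream, in the proof of Theorem~\ref{thm:lm} itself. The main obstacle there --- and the step on which the whole local uniformity rests --- is the careful bookkeeping: one must track each condition ``$N \ge \cdots$'' that arises, verify that the resulting combined function $f$ is continuous and strictly positive with the stated monotonicity in $\frq,\Cm,\cmu$ and the two-sided dependence on $\cv$, and in particular confirm that $\inf_{(\Cm,\cmu,\cv,\frq) \in \cK} f(\Cm,\cmu,\cv,\frq,N) \to \infty$ as $N \to \infty$. Once that has been done and the explicit formula \eqref{CC}--\eqref{CC2} for $C$ recorded, the corollary follows from nothing more than continuity of $C$ and compactness of $\cK$.
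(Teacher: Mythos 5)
Your argument is correct and is exactly the route the paper intends: the corollary is read off from the remark's two facts (the uniform index $N_{0,\cK}$ from \eqref{fN_0} and the continuity of $C$ in \eqref{CC}--\eqref{CC2}) by taking the maximum of $C$ over the compact set $\cK$. Nothing further is needed.
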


We illustrate the result with some examples. 	
	
	\begin{example}[Gaussian random walk] 
		Let $B_t$ be a Brownian motion,  $\mu<0$, and define  the random walk $S^N_m=B_{m}+mN^{-1/2}\mu$ . 
%
We can verify that the bound \eqref{mr} is off by a logarithmic factor in this case, by comparison with the running maximum of the Brownian motion. 
		For $x>0$ and large enough $N$
		\be\label{op}\begin{aligned}
		\P\big(\max_{1\le m\le N}S^N_m\leq x\big)&\geq \P\big(\sup_{0\le t\le N}B_t+tN^{-1/2}\mu\leq x\big)\\
		&\geq 1-e^{-2xN^{-1/2}|\mu|}\geq x|\mu_N|=xN^{-1/2}|\mu|.
		\end{aligned}\ee
		where the middle inequality follows from \eqref{kmt612} with $\mu(N)=\mu$ and $b(N)=xN^{-1/2}$.\eqref{op} shows that the optimal error is at most  $O(x|\mu_N|)$, and that Theorem \ref{thm:lm}, if not optimal, is only $\log N$ away from being so.

\end{example}

A natural way to produce examples is to take $X^N_i$ as the difference of two independent random variables whose means come closer as $N$ grows  and whose variances stay bounded and bounded away from zero. 

\begin{example}[Exponential walk]\label{ex:Exp}
Consider a random walk $S_n=\sum_{k=1}^n X_k$ with step distribution  $X_k\deq Y_\alpha-Y_\beta$ where $Y_\alpha$ and $Y_\beta$ are two independent exponential random variables with rates $\alpha$ and $\beta$, respectively.  $\mu=\E[X_k]=\frac{\beta-\alpha}{\alpha\beta}$ so we assume that $\alpha>\beta$.   The distribution of the  supremum of $S_n$ is  well-known and  also feasible  to compute (Example (b) in Section XII.5 of Feller \cite{fellII}): 
 for $x>0$,   
\begin{align*} 
 \P\bigl( \sup_{n\ge0} S_n \le x) =1-  \frac\beta\alpha e^{-(\alpha-\beta)x}
 =\beta \abs\mu  \bigl( 1+\beta x\bigr) + O(\mu^2x^2)
  \end{align*} 
  where we assume $\abs\mu x$ small and expand $e^s=1+s+O(s^2)$. 
  We obtain a lower bound:
  \begin{align*} 
 \P\bigl( \max_{1\le n\le N} S_n \le x) &\ge  P\bigl( \sup_{n\ge0} S_n \le x) 
  =\beta \abs\mu  \bigl( 1+\beta x\bigr) + O(\mu^2x^2) \\
  &\ge \beta^2 \abs\mu  x  + O(\mu^2x^2). 
  \end{align*} 
Thus  for $\abs\mu\ge N^{-1/2}$ and small $x\abs\mu$, the upper bound \eqref{mr} loses only a logarithmic factor.   
 
That $\max_{1\le n\le N} S_n$ is close to the overall maximum $\sup_{n\ge0} S_n$ in the case $\abs\mu\ge N^{-1/2}$ is a consequence of the fact that the overall maximum is taken at a random time of order $\mu^{-2}$.  
This claim  is seen conveniently through ladder intervals $\{T_i\}_{i\ge 1}$.  These are the intervals $T_i=\tau_i-\tau_{i-1}$ between  successive ladder epochs defined by $\tau_0=0$ and 
\[ \tau_i=\inf\{ n>\tau_{i-1}:  S_n > S_{\tau_{i-1}}\}. \]
The distribution of $T_i$ is given by 
\[   \P(T_i=\infty)=1-\frac\beta\alpha  \quad\text{and}\quad 
\P(T_i=n) = C_{n-1}\frac{\alpha^{n-1}\beta^n}{(\alpha+\beta)^{2n-1}}\quad \text{for } n\in\Z_{>0},  \]
where $\{C_k\}_{k\ge0}$ are the Catalan numbers. (This calculation can be found in Lemma B.3 in the appendix of \cite{fan-sepp-arxiv}.) 
Set $T_0=0$ and let $N=\max\{ n\ge0 : T_n<\infty\}$ be   the number   of finite  ladder intervals.  The maximum $\sup_{n\ge 0}S_n$ is taken at time $\zeta=\sum_{i=1}^N T_i$.  One  calculates $\E[\zeta]=\frac1{\alpha\beta}\mu^{-2}$ and 
$\Vvv[\zeta]=c_{\alpha, \beta} \mu^{-4}$. Thus for large enough $k$,  $\P(\zeta> k\mu^{-2}) \le C_{\alpha, \beta} k^{-2}$. 
\end{example} 

\begin{example}[Log-gamma walk]\label{ex:drw2}
This is the application of Theorem \ref{thm:lm} used in \cite{busa-sepp-poly}. 

	Let $G^\lambda$ denote generically a parameter $\lambda$ gamma random variable, that is,  $G^\lambda$ has density function $f(x)=\Gamma(\lambda)^{-1}x^{\lambda-1}e^{-x}$ on $\R_{>0}$. 
	For $\alpha, \beta>0$  let  $S^{\alpha,\beta}_m=\sum_{i=1}^m X^{\alpha,\beta}_i$  denote the random walk where the distribution of the   i.i.d.\ steps $\{X^{\alpha,\beta}_i\}_{i\geq 1}$ is specified by 
	\[	X^{\alpha,\beta}_1 \; \deq \; \log G^{\alpha}-\log G^{\beta} \]
	with two  independent gamma variables $G^{\alpha}$  and $G^{\beta}$ on the right. 
	
Let  $\psi_0(s)=\Gamma'(s)/\Gamma(s)$ be the digamma function and $\psi_1(s)=\psi_0'(s)$ the trigamma function on $\R_{>0}$. Their key properties are that $\psi_0$  is strictly increasing  with $\psi_0(0+)=-\infty$ and $\psi_0(\infty)=\infty$,  while $\psi_1$ is strictly decreasing and strictly convex with  $\psi_1(0+)=\infty$ and $\psi_1(\infty)=0$.

Fix a compact interval $[\rho_{\rm min}, \rho_{\rm max}]\subset(0,\infty)$.  Fix a positive constant $a_0$ and let $\{s_N\}_{N\ge 1}$ be a sequence of nonnegative reals such that $0\le s_N\le a_0(\log N)^{-3}$. 
Define a set of admissible pairs 
\[  \cS_N=\{(\alpha, \beta):  \alpha, \beta\in [\rho_{\rm min}, \rho_{\rm max}], \; 
-s_N\le \alpha-\beta \le 0\}. 
\] 
For $(\alpha, \beta)\in\cS_N$,  the mean step satisfies 
	\be\label{mu}\begin{aligned} 
	\mu_{\alpha,\beta}&=\E[X^{\alpha,\beta}_1] =\E[\log G^{\alpha}]- \E[\log G^{\beta} ]=\psi_0(\alpha)-\psi_0(\beta) \\
	&=\psi_1(\lambda)(\alpha-\beta) \;\in\; [-a_0\tspa \psi_1(\rho_{\rm min})(\log N)^{-3}, 0] 
	\end{aligned} \ee
where we used the mean value theorem with some $\lambda\in(\rho_{\rm min}, \rho_{\rm max})$.  We take $\cmu=a_0\tspa \psi_1(\rho_{\rm min})$. 

	The MGF of $X^{\alpha,\beta}_1$ is 
	\begin{align}\label{M}
	M_{\alpha,\beta}(\theta)&=\E\big[e^{\theta X^{\alpha,\beta}_1}\big]
	=\E\big[(G^{\alpha})^{\theta}\tspb\big]
	\, \E\big[(G^{\beta})^{-\theta}\tspb\big]
	=\frac{\Gamma(\alpha+\theta)\Gamma(\beta-\theta)}{\Gamma(\alpha)\Gamma(\beta)} 
	\end{align}
	for $\theta\in(-\alpha\tspb, \beta)$. 
For the interval in assumption \eqref{ed} we can take 
$[-\frq , \frq ]=[-\tfrac12\rho_{\rm min}, \tfrac12\rho_{\rm min}]$. Now  \eqref{ed} holds with a single constant $\Cm=\Cm(\rho_{\rm min}, \rho_{\rm max})  $  for all choices of $\alpha, \beta\in[\rho_{\rm min}, \rho_{\rm max}]$. 
	
The variance satisfies 
\[ \Vvv(X^{\alpha,\beta}_1) = \Vvv(\log G^{\alpha})+ \Vvv(\log G^{\beta} ) = \psi_1(\alpha)+ \psi_1(\beta)\ge 2\psi_1(\rho_{\rm max})=\cv.     \]

The constants $(\Cm, \cmu,  \cv, \frq)$ have been fixed and they   work simultaneously for all $(\alpha, \beta)\in\cS_N$  and all $N\ge 1$.  Define $C$ through \eqref{CC}--\eqref{CC2}. Choose $N_0$ so that \eqref{fN_0} holds for all $N\ge N_0$.   Now $C$ and $N_0$ are entirely determined by $(a_0, \rho_{\rm min}, \rho_{\rm max})$.   We state the result as a corollary of Theorem \ref{thm:lm}. 
	
 	\begin{corollary}  In the setting described above   the bound below holds for all  $N\ge N_0$,  $(\alpha, \beta)\in\cS_N$,  and $x\ge(\log N)^2$: 
		\begin{align*}
		\mathbb{P}\Big\{\tspa\max_{1\leq m \leq N}S^{\alpha,\beta}_m\le  x\Big\}\leq C\tsp x \tsp (\log N)(\mu_{\alpha,\beta} \vee N^{-1/2}\tspb).
		\end{align*}
	\end{corollary}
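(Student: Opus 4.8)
The corollary is a direct specialization of Theorem \ref{thm:lm}: the entire task is to verify that, for the constants $(\Cm,\cmu,\cv,\frq)$ exhibited in the paragraphs above, every step distribution $X^{\alpha,\beta}_1$ with $(\alpha,\beta)\in\cS_N$ satisfies Assumption \ref{ass:rw} at index $N$, with those constants pinned down once and for all, independently of $N$ and of the pair $(\alpha,\beta)$. Once this is done, feeding $X^N_i:=X^{\alpha,\beta}_i$ into Theorem \ref{thm:lm} (equivalently, into Corollary \ref{cor:lm} applied to the one-point compact set $\cK=\{(\Cm,\cmu,\cv,\frq)\}$) produces exactly the displayed bound, with $C$ the number \eqref{CC}--\eqref{CC2} built from $(\Cm,\cmu,\cv,\frq)$ — themselves determined by $(a_0,\rho_{\rm min},\rho_{\rm max})$ — and with $N_0$ any index beyond which \eqref{fN_0} holds at that point, finite because the function $f$ in \eqref{fN_0} tends to $\infty$ in $N$.

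For part (i) of Assumption \ref{ass:rw}: since $\alpha,\beta\in[\rho_{\rm min},\rho_{\rm max}]$, formula \eqref{M} shows $M_{\alpha,\beta}$ is finite on $(-\alpha,\beta)\supseteq(-\rho_{\rm min},\rho_{\rm min})$, so I take $\theta_0=\rho_{\rm min}$ and $[-\frq,\frq]=[-\tfrac12\rho_{\rm min},\tfrac12\rho_{\rm min}]$. For $|\theta|\le\frq$ the arguments $\alpha+\theta$ and $\beta-\theta$ lie in the fixed compact set $[\tfrac12\rho_{\rm min},\,\rho_{\rm max}+\tfrac12\rho_{\rm min}]\subset(0,\infty)$. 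Differentiating \eqref{M} expresses each $M^{(i)}_{\alpha,\beta}$ for $0\le i\le 3$ as $M_{\alpha,\beta}$ times a universal polynomial in the values of $\psi_0,\psi_1$ and $\psi_2:=\psi_1'$ at $\alpha+\theta$ and $\beta-\theta$; since $\Gamma,\psi_0,\psi_1,\psi_2$ are continuous, hence bounded, on that compact set, there is a single constant $\Cm=\Cm(\rho_{\rm min},\rho_{\rm max})$ dominating $|M^{(i)}_{\alpha,\beta}(\theta)|$ for all admissible $(\alpha,\beta)$, all $|\theta|\le\frq$, and $0\le i\le 3$. (By the remark after Assumption \ref{ass:rw} it suffices to bound $M^{(0)}_{\alpha,\beta}$, which is immediate from boundedness of $\Gamma$ above and below on the relevant compacts.)

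For part (ii): independence of $G^{\alpha},G^{\beta}$ together with $\Vvv(\log G^{\lambda})=\psi_1(\lambda)$ gives $\Vvv(X^{\alpha,\beta}_1)=\psi_1(\alpha)+\psi_1(\beta)\ge 2\psi_1(\rho_{\rm max})=:\cv>0$ by monotonicity of $\psi_1$, and then $\E[(X^{\alpha,\beta}_1)^2]=\Vvv(X^{\alpha,\beta}_1)+\mu_{\alpha,\beta}^2\ge\cv$. For part (iii): applying the mean value theorem to $\psi_0$ on the interval with endpoints $\alpha,\beta$ yields $\mu_{\alpha,\beta}=\psi_0(\alpha)-\psi_0(\beta)=\psi_1(\lambda)(\alpha-\beta)$ for some $\lambda\in(\rho_{\rm min},\rho_{\rm max})$; since $\alpha-\beta\in[-s_N,0]$ and $0<\psi_1(\lambda)\le\psi_1(\rho_{\rm min})$, we get $-a_0\psi_1(\rho_{\rm min})(\log N)^{-3}\le\mu_{\alpha,\beta}\le 0$, which is precisely \eqref{r} with $\cmu=a_0\psi_1(\rho_{\rm min})$. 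This is where the $N$-dependence of $\cS_N$ through $s_N\le a_0(\log N)^{-3}$ enters: it is calibrated exactly to the decay rate demanded by \eqref{r}.

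I do not expect a genuine obstacle here; the proof is bookkeeping. The two places requiring a little care are (a) fixing $(\Cm,\cmu,\cv,\frq)$ before $N$ and before $(\alpha,\beta)$, so that the constant $C$ and threshold $N_0$ from Theorem \ref{thm:lm} are uniform over all of $\cS_N$ and over all $N$, and (b) the uniformity of the polygamma bounds in part (i), which holds only because $\alpha\pm\theta$ and $\beta\mp\theta$ remain confined to a fixed compact subinterval of $(0,\infty)$. Both are settled by plain compactness, after which the stated estimate is just \eqref{mr} rewritten in the present notation.
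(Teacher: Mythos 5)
Your proposal matches the paper's argument exactly: the paper proves the corollary precisely by verifying Assumption \ref{ass:rw} with $\cmu=a_0\psi_1(\rho_{\rm min})$, $[-\frq,\frq]=[-\tfrac12\rho_{\rm min},\tfrac12\rho_{\rm min}]$, a compactness-based uniform $\Cm(\rho_{\rm min},\rho_{\rm max})$, and $\cv=2\psi_1(\rho_{\rm max})$, then fixing $C$ via \eqref{CC}--\eqref{CC2} and $N_0$ via \eqref{fN_0} and invoking Theorem \ref{thm:lm}. Your bookkeeping is correct and complete.
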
 

 \end{example}

\medskip 

\section{Comparison with the KMT coupling} \label{sec:kmt}

As a counterpoint to our Theorem \ref{thm:lm}  we derive here an estimate for a single random walk with the  Koml\'os-Major-Tusn\'ady (KMT)  \cite{koml-majo-tusn-76}  coupling   with Brownian motion.  We emphasize though that {\it Theorem \ref{thm:kmt} below  is not an alternative to our Theorem \ref{thm:lm} because we do not know how the constants $C, K, \lambda$  below  depend on the distribution of the walk.  Hence without further work we cannot apply the resulting estimate \eqref{kmt620} to an infinite family of random walks.}  

However,  this section does  illustrate  that   in a certain regime of vanishing drift   the   estimates \eqref{mr} and \eqref{kmt620}  are essentially equivalent, as  explained below in Remark \ref{rm:kmt4}.  So even if one were to  conclude that the constants $C, K, \lambda$  below can be taken uniform, the result remains the same.  

Let $\wb S_n=\sum_{k=1}^n \wb X_k$ be a  mean-zero  random walk with i.i.d.\ steps $\{\wb X_k\}$ and unit variance $E[\,\wb X^{\tspa 2}\,]=1$. 
The  KMT coupling (Theorem 1 in \cite{koml-majo-tusn-76})  constructs this walk  together with a  standard Brownian motion $B_\bbullet$ on a probability space such that the following bound holds:  
\be\label{kmt5} 
P\bigl( \;\max_{1\le k\le N} \abs{\wb S_k-B_k} \ge C\log N +z\tspa \bigr) \le Ke^{-\lambda z}
\qquad \text{for all } N\in\Z_{>0} \ \text{ and }  \ z>0,  \ee
where $C, K, \lambda$ are finite positive constants determined by the distribution of $\wb X_k$.  

We apply this to the running maximum of a random walk with a negative drift. 

\begin{theorem}\label{thm:kmt}   Let 
$S_n=\sum_{i=1}^n X_i$ be a random walk with  i.i.d.\ steps $\{X_i\}$ that 
satisfy $E[e^{tX}]<\infty$ for $t\in(-\delta,\delta)$ for some $\delta>0$. Assume the drift is  negative:   $\mu=EX_1<0$,  and  the  variance $\sigma^2=E[\tspb(X_1-\mu)^2\tspb]>0$.  
Then there exists a constant $C_1$ determined by the distribution of the normalized variable  $\wb X_1=\sigma^{-1}(X_1-\mu)$ such that, for all real $x>0$ and integers $N>e^4$, 
\be\label{kmt620}  \begin{aligned}
P\bigl\{\tspb\max_{0\le k\le N} S_k < x \bigr\}
&\le  C_1\Bigl( \tspa N^{1-(\log N)/2} +   \frac{\sigma x+\sigma^2\log N}{N^{3/2}\mu^2} \tspb e^{(\sigma^{-1}x+\log N)\sigma^{-1}\mu} \tspa \Bigr)\\[3pt]
&\qquad 
+ 1 -  e^{2(\sigma^{-1}x+C_1\log N)\sigma^{-1}\mu} . 
\end{aligned}\ee 

\end{theorem}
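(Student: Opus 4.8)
The plan is to push the running maximum of $S_n$ through the KMT coupling \eqref{kmt5} applied to the normalized walk $\wb S_n = \sum_{k\le n}\wb X_k$ with $\wb X_k = \sigma^{-1}(X_k-\mu)$, so that $S_n = \sigma\wb S_n + n\mu$. First I would fix the threshold in \eqref{kmt5}: set $z$ so that $C\log N + z = C_1\log N$ for a suitable $C_1>C$ (chosen large enough that both the $N^{1-(\log N)/2}$ term and the absorbed constants work out), obtaining a ``good'' event $\mathcal G = \{\max_{1\le k\le N}\abs{\wb S_k - B_k} \le C_1\log N\}$ with $P(\mathcal G^c)\le Ke^{-\lambda(C_1-C)\log N} = KN^{-\lambda(C_1-C)}$, which is bounded by the first displayed term $C_1 N^{1-(\log N)/2}$ once $N$ is large (here $N>e^4$ makes $(\log N)/2 > 2$, giving plenty of polynomial decay to dominate any fixed power); in fact one wants $C_1$ large enough that $\lambda(C_1-C)$ exceeds whatever is needed, and the clean bound $N^{1-(\log N)/2}$ comes from comparing $KN^{-\text{(const)}}$ with it.

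On the event $\mathcal G$ we have, for every $k$, $S_k = \sigma\wb S_k + k\mu \ge \sigma B_k - \sigma C_1\log N + k\mu$. Hence
\[
\{\max_{0\le k\le N} S_k < x\} \cap \mathcal G \ \subseteq\ \{\max_{0\le k\le N}(\sigma B_k + k\mu) < x + \sigma C_1\log N\}.
\]
The next step is to compare the discrete maximum $\max_{0\le k\le N}(\sigma B_k+k\mu)$ with the continuous supremum $\sup_{0\le t\le N}(\sigma B_t + t\mu)$; since the drift is negative, the continuous sup is at least the discrete max, so the event above is contained in $\{\sup_{0\le t\le N}(\sigma B_t+t\mu) < x+\sigma C_1\log N\}$. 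By Brownian scaling $\sigma B_t + t\mu \deq \sigma(B_{t} + t\sigma^{-1}\mu)$, and using the classical formula \eqref{kmt612} for the distribution of the supremum of Brownian motion with drift over $[0,1]$ (after rescaling time by $N$), the probability that this sup stays below $y := x+\sigma C_1\log N$ equals $1 - e^{-2 y \sigma^{-2}\abs\mu}\big(1+o(1)\big)$ type expressions; more precisely, writing things out, $P(\sup_{0\le t\le N}(\sigma B_t+t\mu)<y) = 1 - e^{-2y\abs\mu/\sigma^2}$ on the exact Brownian event (or the two-term bound coming from \eqref{kmt612}), which produces the last line $1 - e^{2(\sigma^{-1}x + C_1\log N)\sigma^{-1}\mu}$ of \eqref{kmt620}.

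The remaining middle term $C_1\dfrac{\sigma x + \sigma^2\log N}{N^{3/2}\mu^2}\, e^{(\sigma^{-1}x+\log N)\sigma^{-1}\mu}$ should come from accounting for the error in replacing the supremum over $[0,N]$ by the supremum over $[0,\infty)$, i.e. the contribution of the event that the Brownian-with-drift sup is attained after time $N$: by the reflection-principle/hitting-time computation this tail is governed by $P(\sup_{t\ge N}(\sigma B_t + t\mu) > y)$, which by a Gaussian tail bound on $\sigma B_N + N\mu$ and the strong Markov property is of order $\dfrac{\sigma\sqrt N}{\abs{N\mu}}\cdot\dfrac{1}{?}\,e^{-(\,\cdots)^2/(2N\sigma^2)}$; carrying out the Gaussian estimate $P(\mathcal N(0,1) > a) \le a^{-1}e^{-a^2/2}$ with $a = (y - N\abs\mu\cdot(\text{sign stuff}))/(\sigma\sqrt N)$ and simplifying (using $y\abs\mu \ll N\mu^2$ in the relevant regime, or just keeping it as an upper bound) yields exactly the stated $N^{-3/2}\mu^{-2}$ prefactor times the exponential. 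I would handle this by splitting $\{\sup_{0\le t\le N}(\cdots)<y\}$ against $\{\sup_{t\ge 0}(\cdots)<y\}$, bounding the difference by $P(\exists\, t>N: \sigma B_t + t\mu > y)$, and then dominating that crossing probability by $\sum$ or an integral of one-dimensional Gaussian tails over a geometric mesh of scales $2^j N$.

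\medskip

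The main obstacle I expect is the bookkeeping in the last paragraph: getting the ``after time $N$'' correction into precisely the clean form $\dfrac{\sigma x+\sigma^2\log N}{N^{3/2}\mu^2}e^{(\sigma^{-1}x+\log N)\sigma^{-1}\mu}$ requires choosing the right decomposition of the time axis and being careful that the Gaussian tail and the drift exponential combine without leaving a stray polynomial factor; a naive union bound over dyadic scales tends to produce an extra $\log$ or a worse power of $N$, so one must either use the exact hitting-time density of Brownian motion with drift (which gives an honest closed form) or be clever about the exponential gain from the drift killing the far-away scales. Everything else — the KMT tail estimate, the discrete-to-continuous comparison (free because $\mu<0$), and the Brownian sup formula \eqref{kmt612} — is routine.
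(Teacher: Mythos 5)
Your high-level architecture is the paper's (KMT coupling, then the exact law of the supremum of drifted Brownian motion), but two of your steps are wrong as written, and one of them is precisely where the first term of \eqref{kmt620} comes from. The discrete-to-continuous comparison is not free: since the integer times are a subset of $[0,N]$, the continuous supremum dominates the discrete maximum, so $\{\sup_{0\le t\le N}(\sigma B_t+t\mu)<y\}\subseteq\{\max_{0\le k\le N}(\sigma B_k+k\mu)<y\}$ --- the opposite containment from the one you assert and use. The negativity of $\mu$ only lets you drop the drift increment inside each unit interval, not the Brownian one. To reverse the containment you must control the oscillations $M_k=\sup_{0\le s\le 1}(B_{k+s}-B_k)$; the paper enlarges the threshold by another $C\log N$ and pays the error $P\{\max_{0\le k\le N-1}M_k>C\log N\}$, which the Gaussian tail computation \eqref{kmt609} bounds by $(\sqrt{2/\pi}/\log N)\,N^{1-(\log N)/2}$. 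In other words, the super-polynomially small first term of \eqref{kmt620} is the discretization error, not the KMT tail.

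Relatedly, your absorption of the KMT tail is backwards: $N^{1-(\log N)/2}=e^{\log N-(\log N)^2/2}$ decays faster than every fixed power of $N$, so $KN^{-\lambda(C_1-C)}$ is eventually much \emph{larger} than $C_1N^{1-(\log N)/2}$, not smaller; no fixed choice of $C_1$ makes that domination hold for all large $N$. The paper instead takes $z=\lambda^{-1}\log N$, so the KMT tail is $K/N$ and the threshold shift stays of order $\log N$. Your treatment of the middle term identifies the right object (the event that the drifted Brownian motion first reaches level $b(N)$ after time $1$ in rescaled time) and correctly anticipates that a dyadic union bound is lossy; the paper goes straight to the exact first-passage density (Karatzas--Shreve (3.5.12)) as in \eqref{kmt612}, writing $P^{(\mu(N))}\{T_{b(N)}>1\}=b(N)\tsp e^{b(N)\mu(N)}\int_1^\infty(2\pi s^3)^{-1/2}e^{-b(N)^2/(2s)-\mu(N)^2s/2}\,ds+1-e^{2b(N)\mu(N)}$ and bounding the integral by $2/\mu(N)^2$ after discarding $(2\pi s^3)^{-1/2}\le 1$ and the term $-b(N)^2/(2s)$; that is what produces the clean $N^{-3/2}\mu^{-2}$ prefactor. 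So the last displayed line of \eqref{kmt620} and the middle term are within reach of your plan, but the first term requires the oscillation argument you omitted, and the KMT tail must be handled as a separate (polynomially small) contribution rather than folded into $N^{1-(\log N)/2}$.
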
 

\medskip 

\begin{remark} \label{rm:kmt4}  To compare this estimate with Theorem \ref{thm:lm}, imagine that we can let  $\mu$ vary as a function of $N$ while preserving the constant $C_1$ in \eqref{kmt620}. 
 Consider the regime 
where $\sigma^2$ is constant,   $x>\log N$ and   $\abs\mu$ vanishes fast enough  so that $x\abs\mu$ stays bounded.   Then the first parenthetical expression  on the right of \eqref{kmt620}  is dominated by a constant multiple of $x  N^{-3/2}\mu^{-2}$.  To the last part  apply $1-e^{s}\le \abs s$  for $s<0$.  The bound \eqref{kmt620} becomes 
\be\label{kmt622}  
P\bigl\{\tspb\max_{0\le k\le N} S_k < x \bigr\}
\le  C_2 \tspb x  \bigl( 
N^{-3/2}\mu^{-2}   +   \abs\mu\bigr) . 
\ee 
The bound \eqref{mr} is worse than the one above  by at most a  $\log N$ factor,  and not at all if $\mu$ vanishes fast enough.   In particular, for the application in \cite{busa-sepp-poly}, the KMT bound cannot give anything substantially better than Theorem \ref{thm:lm}.  
\end{remark}

\medskip

\begin{proof}[Proof of Theorem \ref{thm:kmt}]  

Apply \eqref{kmt5} to 
the mean-zero unit-variance  normalized  walk  $\wb S_N= \sigma^{-1}(S_N-N\mu)$.  
To simplify some steps  below  we can  assume  that $C\ge 1\vee\lambda^{-1}$.
Let $x>0$ and $z=\lambda^{-1}\log N$. 
\begin{align}
\nn &P\bigl\{\tspb\max_{0\le k\le N} S_k < x\bigr\} = P\bigl\{ \tspb\max_{0\le k\le N} \bigl(\wb S_k  +k\sigma^{-1} \mu\bigr)  < \sigma^{-1} x\bigr\}\\[4pt] 
\label{kmt600} &\le  Ke^{-\lambda z}  +   P\bigl\{\tspb \max_{0\le k\le N} \bigl(B_k  +k\sigma^{-1}\mu\bigr)  < \sigma^{-1} x+ C\log N + z \bigr\}  . 
\end{align}

Let $M_k=\sup_{0\le s\le 1}( B_{k+s}-B_k)$.   Since  $\mu<0$, 
\begin{align*}
\sup_{0\le t\le N} \bigl(B_t  +t\sigma^{-1}\mu\bigr) 
&\le \max_{0\le k\le N} \bigl(B_k  +k\sigma^{-1}\mu\bigr)
+ \max_{0\le k\le N-1}  M_k. 
\end{align*}
With this we continue from above.  
\be\label{kmt603} \begin{aligned}
\text{line \eqref{kmt600}}  \; \le  \; 
 Ke^{-\lambda z} \; + \; & P\bigl\{\tspb \sup_{0\le t\le N} \bigl(B_t  +t\sigma^{-1}\mu\bigr) 
  < \sigma^{-1}x+ 2C\log N + z \bigr\}    \\[3pt] 
 &\qquad 
+  P\bigl\{ \tspb\max_{0\le k\le n-1}  M_k >  C\log N \bigr\} . 
\end{aligned}\ee 

We bound the two probabilities above separately. Recall   that $C\ge 1$.   For the running maximum of standard Brownian motion,  by (2.8.4) on page 96 of  \cite{kara-shre},   
\be\label{kmt609} \begin{aligned} 
P\bigl\{ \tspb\max_{0\le k\le N-1}  M_k >  \log N \bigr\} 
&\le N \tsp P\bigl\{ \tspa \sup_{0\le s\le 1}  B_s >  \log N \bigr\} 
= N \sqrt{2/\pi} \int_{\log N}^\infty e^{-y^2/2}\,dy  \\
&\le  \frac{N\sqrt{2/\pi}}{ \log N}   \int_{\log N}^\infty y\tspa e^{-y^2/2}\,dy
=  \frac{\sqrt{2/\pi}}{ \log N}  N^{1-(\log N)/2}   .   
\end{aligned} \ee

For the running maximum of  Brownian motion with drift,  use first Brownian scaling, and then the density of the hitting time $T_{b(N)}$ of the point 
$b(N)=N^{-1/2}(\sigma^{-1}x+ 2C\log N + z) $  with drift $\mu(N)=\sigma^{-1}N^{1/2}\mu<0$  
 from (3.5.12) on page 197 of  \cite{kara-shre}. 
\be\label{kmt612} \begin{aligned} 
& P\bigl\{\tspb \sup_{0\le t\le N} \bigl(B_t  +t\sigma^{-1}\mu\bigr) 
  < \sigma^{-1}x+ 2C\log N + z \bigr\}    \\[3pt] 
 &= P\bigl\{\tspb \sup_{0\le t\le 1} \bigl(B_t  +t\sigma^{-1}N^{1/2}\mu\bigr) 
  <  N^{-1/2}(\sigma^{-1}x+ 2C\log N + z)  \bigr\}    \\[3pt] 
  &= P\bigl\{\tspb \sup_{0\le t\le 1} \bigl(B_t  +t \mu(N)\bigr) 
  <  b(N)   \bigr\}  =  P^{(\mu(N))}\{  T_{b(N)} > 1 \}   \\[3pt] 
  &= {b(N)} \int_1^\infty \frac1{\sqrt{2\pi s^3}} e^{-(b(N)-\mu(N)s)^2/2s}\,ds
  +  P^{(\mu(N))}\{  T_{b(N)} = \infty \} \\[3pt] 
  &= {b(N)}  e^{b(N)\mu(N)}\int_1^\infty \frac1{\sqrt{2\pi s^3}} e^{-\tfrac12b(N)^2s^{-1}-\tfrac12\mu(N)^2s}\,ds 
  + 1-  e^{2b(N)\mu(N)} \\
 & \le 2 \tsp e^{b(N)\mu(N)} \tspb \frac{b(N)}{\mu(N)^2}  + 1-  e^{2b(N)\mu(N)} \\
 &  \le  2\tsp e^{(\sigma^{-1}x+\log N)\sigma^{-1}\mu} \tspb \frac{\sigma x+3C\sigma^2\log N}{N^{3/2}\mu^2}
 + 1 -  e^{2(\sigma^{-1}x+3C\log N)\sigma^{-1}\mu}. 
\end{aligned} \ee
The second last inequality dropped   the denominator $2\pi s^3\ge 1$ and the    term $-\tfrac12b(N)^2s^{-1}$  from the exponent, and then integrated. 
The last inequality substituted in 
$z=\lambda^{-1}\log N \le C\log N$ to bound 
\[   N^{-1/2}(\sigma^{-1}x+\log N) \le b(N)\le N^{-1/2}(\sigma^{-1}x+3C\log N). \]
The conclusion \eqref{kmt620} follows from  substituting into \eqref{kmt603}  the bounds from above.
\end{proof}

\medskip 

\section{Auxiliary facts}

Before starting the proof proper, we record some simple facts.  First, assumptions 
	 \eqref{ed} and  \eqref{cv} gives these  bounds:  
	\be\label{cubic}\begin{aligned}
		&0<\cv \leq \mu_{2,N}\equiv \E\big[(X^N_1)^2\big]=M_N^{(2)}(0)\leq C_M,\\[3pt] 
		&|\mu_{3,N}|\equiv |\E\big[(X^N_1)^3\big]| =\abs{M_N^{(3)}(0)} \le C_M, \\[3pt] 
		&\P(X^N_1>t)\leq C_Me^{-\frq t}. 
	\end{aligned}\ee

%
\begin{lemma}\label{lem:exp}   Let $\{Y_i\}$ be   i.i.d.\ random variables with common marginal distribution $\nu$.  Assume  that, for two constants $0<c_1,C_1<\infty$,  
\begin{align}
\E(e^{tY_1})\leq C_1 \quad\text{ for }\quad  t\in[0,c_1]\label{assu2}.
\end{align} 
Then 
	\begin{align*}
	\mu_{\rm max}=\mu_{\rm max}(\nu, n)\equiv\E\bigl[ \max\{0,Y_1,...,Y_n\}] \leq c_1^{-1}\log(C_1n+1). 
	\end{align*}
\end{lemma}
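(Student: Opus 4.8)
The plan is to exploit the exponential moment hypothesis \eqref{assu2} via the standard Cram\'er-type device: bound the maximum by controlling the exponential of the maximum. Fix any $t\in(0,c_1]$. Since $x\mapsto e^{tx}$ is convex, Jensen's inequality gives
\[
e^{\tspa t\tspa\mu_{\rm max}} = e^{\tspa t\,\E[\max\{0,Y_1,\dots,Y_n\}]} \le \E\bigl[e^{\tspa t\max\{0,Y_1,\dots,Y_n\}}\bigr].
\]

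Next I would observe that $e^{\tspa t\max\{0,Y_1,\dots,Y_n\}} = \max\{1,e^{tY_1},\dots,e^{tY_n}\}$ because $t>0$, and then bound the maximum of nonnegative quantities by their sum:
\[
\E\bigl[\max\{1,e^{tY_1},\dots,e^{tY_n}\}\bigr] \le 1 + \sum_{i=1}^n \E\bigl[e^{tY_i}\bigr] = 1 + n\,\E\bigl[e^{tY_1}\bigr] \le 1 + nC_1,
\]
using the i.i.d.\ assumption and \eqref{assu2} in the last step (this is where we need $t\le c_1$). Combining the two displays yields $t\tspa\mu_{\rm max}\le \log(nC_1+1)$, hence $\mu_{\rm max}\le t^{-1}\log(nC_1+1)$.

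Finally I would optimize trivially by taking $t=c_1$, the largest admissible value, which gives $\mu_{\rm max}\le c_1^{-1}\log(C_1 n+1)$, as claimed. There is really no obstacle here; the only point requiring a word of care is the identity $e^{\tspa t\max_i a_i}=\max_i e^{\tspa t a_i}$ (valid since $t>0$ and including the constant term $1=e^{\tspa t\cdot 0}$ to account for the $0$ in the maximum), and the fact that we must keep $t$ within $[0,c_1]$ so that the moment bound applies. One should also note in passing that $\mu_{\rm max}$ is finite and nonnegative, which is immediate since $0\le\max\{0,Y_1,\dots,Y_n\}\le\sum_{i=1}^n e^{c_1 Y_i}/c_1$ up to a constant, or simply from the bound just derived.
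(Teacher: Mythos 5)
Your proof is correct and is essentially identical to the paper's: Jensen's inequality on $e^{t\tspa\mu_{\rm max}}$, bounding the maximum of the exponentials by $1$ plus their sum, and taking $t=c_1$. No issues.
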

\begin{proof}
	For $0<t\leq c_1$, 
	\begin{align*}
	e^{t\mu_{\rm max}}\leq \E\big(e^{t(0\vee \max_{1\leq i  \leq n}Y_i)}\big)\leq 1+\E\Big(\sum_{i=1}^{n}e^{tY_i}\Big)=1+ n\E\big(e^{tY_1}\big)\leq C_1n+1,
	\end{align*}
	and the claim follows by taking $t=c_1$. 
	\end{proof}

%

Since $M_N''>0$ there is a unique minimizer 
\begin{align}\label{theta}
\theta^N_0 
=\arg \min\{ M_N(\theta)\}.
\end{align}
\begin{lemma}
Let $N_0$ be such that $\Cm\abs{\mu_N} \le \tfrac13\cv$ for $N\ge N_0$ and 
	set   $\cm =2\cva^{-2} $.  Then for $N\ge N_0$, 
	\begin{align*}
		0\le\theta^N_0\leq \cm |\mu_N|.
	\end{align*}
\end{lemma}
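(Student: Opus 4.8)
The plan is to estimate the minimizer $\theta^N_0$ of $M_N$ by working with the derivative $M_N'$ and using the convexity of $M_N$ together with the Taylor expansion of $M_N'$ around the origin. Since $M_N$ is strictly convex and differentiable on $(-\theta_0,\theta_0)$, the minimizer $\theta^N_0$ is characterized by $M_N'(\theta^N_0)=0$ (provided it lies in the interval, which we must check). Because $M_N'(0)=\mu_N\le 0$ and $M_N'$ is strictly increasing, we immediately get $\theta^N_0\ge 0$. The content is the upper bound $\theta^N_0\le \cm\abs{\mu_N}$.

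\medskip

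First I would Taylor-expand $M_N'$ at the origin: for $\theta\in[0,\frq]$,
\[
M_N'(\theta)=M_N'(0)+M_N''(0)\,\theta+\tfrac12 M_N^{(3)}(\xi)\,\theta^2=\mu_N+\mu_{2,N}\theta+\tfrac12 M_N^{(3)}(\xi)\theta^2
\]
for some $\xi\in(0,\theta)$. Using the bounds from \eqref{cubic}, namely $\mu_{2,N}\ge\cv$ and $\abs{M_N^{(3)}(\xi)}\le\Cm$, I would show that at the candidate point $\theta=\cm\abs{\mu_N}=2\cva^{-2}\abs{\mu_N}$ the derivative is already nonnegative:
\[
M_N'(\cm\abs{\mu_N})\ge \mu_N+\cv\cm\abs{\mu_N}-\tfrac12\Cm\cm^2\mu_N^2
= \abs{\mu_N}\bigl(-1+\cv\cm-\tfrac12\Cm\cm^2\abs{\mu_N}\bigr).
\]
Since $\cv\cm=2\cva^{-2}\cdot\cv=2$, the bracket equals $1-\tfrac12\Cm\cm^2\abs{\mu_N}$, which is $\ge 0$ once $\abs{\mu_N}$ is small enough — and the hypothesis $\Cm\abs{\mu_N}\le\tfrac13\cv$ for $N\ge N_0$ is exactly the kind of smallness condition that makes this work (one checks $\tfrac12\Cm\cm^2\abs{\mu_N}=2\Cm\cva^{-4}\abs{\mu_N}\le \tfrac23\cva^{-3}\le 1$ after possibly also using $\cv\le\Cm$, or one simply absorbs any needed further smallness into the choice of $N_0$; I would state the precise inequality used). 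By strict monotonicity of $M_N'$ and $M_N'(\theta^N_0)=0\le M_N'(\cm\abs{\mu_N})$, this forces $\theta^N_0\le\cm\abs{\mu_N}$.

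\medskip

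Two small points need care, and I would handle them up front. First, one must ensure $\cm\abs{\mu_N}\le\frq$ so that the Taylor expansion and the derivative bounds \eqref{cubic} are valid on $[0,\cm\abs{\mu_N}]$; this again follows for $N\ge N_0$ from \eqref{r} (which makes $\abs{\mu_N}$ as small as we like) — I would note it as an additional requirement folded into $N_0$. Second, one must know that the minimizer $\theta^N_0$ actually lies in $[0,\frq]$ rather than escaping toward $\theta_0$; since we have just exhibited a point $\cm\abs{\mu_N}\in(0,\frq]$ at which $M_N'\ge 0$, and $M_N'(0)=\mu_N\le 0$, the minimizer lies in $[0,\cm\abs{\mu_N}]\subset[0,\frq]$ by the intermediate value theorem applied to the continuous increasing function $M_N'$, so both issues are resolved simultaneously.

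\medskip

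The main obstacle is really just bookkeeping: making sure the smallness condition $\Cm\abs{\mu_N}\le\tfrac13\cv$ (together with whatever is needed from \eqref{r}) is genuinely enough to push $\tfrac12\Cm\cm^2\abs{\mu_N}\le 1$ and $\cm\abs{\mu_N}\le\frq$, rather than requiring a stronger hypothesis. Given that $N_0$ in the statement is allowed to depend on all four parameters and is only constrained to satisfy $\Cm\abs{\mu_N}\le\tfrac13\cv$ for $N\ge N_0$, any additional finitely many smallness requirements can be absorbed; so there is no real difficulty, only the need to write the chain of inequalities cleanly and flag each use of $N\ge N_0$.
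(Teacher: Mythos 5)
Your proof is correct and follows essentially the same route as the paper's: Taylor-expand $M_N'$ at the origin, show $M_N'(\cm\abs{\mu_N})\ge 0$ using $\mu_{2,N}\ge\cv$ and $|M_N^{(3)}|\le\Cm$ together with the smallness of $\abs{\mu_N}$, and conclude by strict monotonicity of $M_N'$. The only cosmetic difference is that the paper first passes from $\cm\abs{\mu_N}$ to the smaller point $2\abs{\mu_N}/\mu_{2,N}$ before expanding; your direct evaluation, and your remark that any residual smallness requirement can be folded into the choice of $N_0$, are equally valid.
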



\begin{proof}   If $M'_N(0)=\mu_N=0$ then the minimum  is taken  at $\theta^N_0=0$. 

So suppose  $M'_N(0)=\mu_N<0$.   Expansion 
	for $\theta\in(0,\frq )$ gives, with some $\theta'\in(0,\theta)$,   
	\begin{align*}  
	M'_N(\theta)&=\mu_N+\mu_{2,N}\theta+  \tfrac12  M^{(3)}_N(\theta') \tspa \theta^2 
	\ge  \mu_N+\mu_{2,N}\theta -   \tfrac12 \Cm   \theta^2. 
	\end{align*}
Since $M'$ is strictly increasing and   $\cm =2/\cv\ge 2\mu_{2,N}^{-1}$,  by the choice of $N_0$ we have  for   $N\ge N_0$
	\begin{align*}
		& 
		M'_N(-\cm \mu_N) \ge M'_N\Big(-\frac{2\mu_N}{\mu_{2,N}}\Big)
		\ge -\mu_N -  2\Cm  \tspb \frac{\mu_N^2}{\mu_{2,N}^2} 
		\ge -\mu_N\bigl( 1 - \tfrac23 \cv /\mu_{2,N}^2\bigr) >0.   	
		\end{align*}
	It follows that there exists a unique $\theta^N_0\in (0,\cm  |\mu_N|)$ such that $M'_N(\theta^N_0)=0.$ 
	\end{proof}

Define a tilted measure $Q(d\omega)=\rnder^{\theta_0^N}_{N,n}(\omega)\P(d\omega)$ in terms of the  Radon-Nikodym derivative 
\begin{align*}
\rnder^{\theta_0^N}_{N,n}(\omega)=\frac{e^{\theta^N_0S^N_n}}{\E\big(e^{\theta^N_0S^N_n}\big)}.
\end{align*}
Denote the expectation under  $Q$ by $\E^Q$.   Increase $N_0$ further so that   $N\ge N_0$ implies   $\theta^N_0\in[-\frq /2,\frq /2]$ and $-\mu_N\le 2$.  Then  for $0\leq i \leq 3$ and   $\theta\in(-\frq /2,\frq /2)$, the MGF under $Q$ satisfies
\be\label{QM4}\begin{aligned}
	M^{(i)}_{Q,N}(\theta)&=\E^Q\big((X^N_1)^ie^{\theta X^1_N}\big)=M_N(\theta^N_0)^{-1}\E\big((X^N_1)^ie^{(\theta+\theta^N_0) X^1_N}\big) \\
	&=M_N(\theta^N_0)^{-1}M^{(i)}_N(\theta^N_0+\theta)\leq e^{-\mu_N \theta_0^N}\Cm  \leq e^{\frq }\Cm  ,
\end{aligned}\ee
where the first inequality  used Jensen's inequality and \eqref{ed}.
From this we get  moment bounds under  $Q$: for $0\leq i \leq 3$, 
\be\label{qm}\begin{aligned}
	\E^Q\big((X^N_1)^i\big)&=M^{(i)}_{Q,N}(0)\leq e^{\frq }\Cm. 
\end{aligned}\ee
For $\abs\theta\leq \theta_1$, there exists $\theta'\in(-\theta_1,\theta_1)$ 
\begin{align*}
	M_N^{(2)}(\theta)=\mu_{2,N}+M_N^{(3)}(\theta')\theta 
\end{align*}
Increase $N_0$ further if necessary so that 
  $\theta^N_0\le\frac{\cv}{2\Cm}$ for $N\ge N_0$ and we can write 
\begin{align*}
	M_N^{(2)}(\theta_0^N)\geq \cv-\Cm\theta_0^N\ge \frac{\cv}{2}. 
\end{align*}
Then from $\E^Q(X^N_1)=0$ and the third equation in \eqref{QM4}, 
\begin{align}\label{vq}
	\Vvv^Q(X^N_1)=\E^Q\big((X^N_1)^2\big)&=M^{(2)}_{Q,N}(0)=M_N(\theta^N_0)^{-1}M^{(2)}_N(\theta^N_0)\ge C_M^{-1}\frac{\cv}{2}.
\end{align}

\section{Proof of the main theorem}
To lighten the notation we omit the label $N$  from $\mu=\mu_N$ and $\theta_0=\theta^N_0$, and from some other notation that obviously depend on $N$. For $y>0$ let 
\begin{align*}
\tau_y=\inf\{m\ge 1: |S^N_m|\geq y\} 
\end{align*}
denote  the first hitting time of the cylinder of width $2y$.  Let $\Phi_{\sigma^2}$ denote the centered Gaussian distribution with variance $\sigma^2$. 

 
\begin{lemma}\label{lem:ubc1}   
	For real $k\ge 0$ and $y\ge y_0$  we  have  
	$ 
		\mathbb{P}(\tau_y> ky^2) \leq 2 e^{-c_\tau k},
	$ 
	where
\be\label{y_0}  y_0=1\,\vee\,\frac{6\Cm  \sigma_*^{-3}}{1-\Phi_{\cv }[-2,2]} 
\qquad\text{and}\qquad 
	c_\tau=\log\frac2{1+\Phi_{\cv } [-2,2]} \,\in\,(0,\log 2). 	
	\ee 
\end{lemma}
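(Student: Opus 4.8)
The statement asserts that the walk exits the cylinder $\{|S^N_m|<y\}$ in time $O(y^2)$ with exponentially small failure probability, uniformly in $N$. The natural approach is to chop the time interval $[0,ky^2]$ into blocks of length $y^2$ (or a fixed multiple thereof), argue that in each block the walk moves by order $y$ with probability bounded below by a constant $p>0$, and that these events are (essentially) independent across blocks by the Markov property. Then a walk that has \emph{not} exited by time $ky^2$ has failed to exit in each of $\lfloor k\rfloor$ consecutive blocks, giving a bound $(1-p)^{\lfloor k\rfloor}\le 2e^{-c_\tau k}$ once the constants line up. So first I would fix a block length $L=y^2$ and, using the strong Markov property at times $jy^2$, reduce to showing: starting from any position $s$ with $|s|<y$, the probability that $|S^N_{jy^2}-s+\text{(block displacement)}|$ stays $<y$ throughout the block is at most $\tfrac12(1+\Phi_{\cv}[-2,2])$.

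\emph{The core estimate.} Inside one block of length $y^2$, the increment $S^N_{m+y^2}-S^N_m$ is a sum of $y^2$ i.i.d.\ steps with mean $y^2\mu$ and variance $\asymp y^2\sigma_N^2$. Since $|\mu|$ is tiny (bounded by $\cmu(\log N)^{-3}$) and $\sigma_N^2\ge\cv$, the central limit theorem says this increment is roughly $N(0,y^2\sigma_N^2)$, hence has absolute value $\ge 2y\sigma_N\ge 2y\cva$ with probability close to $1-\Phi_{\cv}[-2,2]$. If the increment over a block has absolute value $\ge 2y\cva$ — actually one needs the displacement to push $|S^N|$ past $y$ regardless of the starting point $s\in(-y,y)$, so one should ask for the increment to exceed, say, $2y$ in the appropriate direction; here is where the precise constant $6\Cm\cva^{-3}/(1-\Phi_{\cv}[-2,2])$ and the threshold $y\ge y_0$ enter — then the walk has exited. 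To make the CLT heuristic rigorous and \emph{uniform in $N$} I would invoke the Berry–Esseen theorem: the normalized block increment is within $C_{\rm BE}\mu_{3,N}/(\sigma_N^3\sqrt{y^2})$ in Kolmogorov distance of a standard Gaussian, and by \eqref{cubic} we have $|\mu_{3,N}|\le\Cm$ and $\sigma_N\ge\cva$, so the error is at most $\Cm\cva^{-3}/y$. Requiring $y\ge y_0$ with $y_0$ as in \eqref{y_0} makes this error small enough that the block-exit probability is bounded below by a constant, specifically so that the non-exit probability per block is at most $\tfrac12(1+\Phi_{\cv}[-2,2])=e^{-c_\tau}$.

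\emph{Assembling the bound.} With $p_{\rm block}\le e^{-c_\tau}$ for the per-block failure probability, conditioning successively via the Markov property at $y^2,2y^2,\dots$ gives $\P(\tau_y>\lfloor k\rfloor y^2)\le e^{-c_\tau\lfloor k\rfloor}$, and since $\lfloor k\rfloor\ge k-1$ this is $\le e^{c_\tau}e^{-c_\tau k}\le 2e^{-c_\tau k}$ because $c_\tau<\log 2$. One has to be slightly careful that $ky^2$ need not be an integer and that $k$ may be less than $1$ (in which case the bound $2e^{-c_\tau k}\ge 2e^{-c_\tau}=1+\Phi_{\cv}[-2,2]\ge1$ holds trivially), but these are routine.

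\emph{Main obstacle.} The delicate point is the uniform-in-$N$ lower bound on the single-block exit probability. The drift $y^2\mu$ over a block is of order $y^2\cmu(\log N)^{-3}$, which is \emph{not} uniformly small compared to $y$ unless $y$ is controlled relative to $N$ — but in fact here $y$ is a free parameter that could be as large as $\asymp N$. A cleaner route, avoiding any interaction between $y$ and the drift, is to note that adding a negative drift only helps the walk exit \emph{downward}: one can compare with the driftless walk by a stochastic domination / coupling argument (couple $S^N$ with the mean-zero walk obtained by recentering, so $S^N_m\le \widetilde S_m$ pathwise), and then the block estimate for $\widetilde S$ via Berry–Esseen suffices, since exiting the cylinder for the \emph{smaller} walk $S^N$ on the upper side and for $\widetilde S$ on the lower side can both be arranged. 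Getting this domination and the "regardless of starting point $s$" quantifier to interact correctly with the $2y$ threshold is the part that needs the most care, and it is exactly what forces the explicit $y_0$ in \eqref{y_0}.
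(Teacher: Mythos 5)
Your architecture is the paper's: blocks of length $\fl{y}^2$, a per-block non-exit probability bounded by $\tfrac12(1+\Phi_{\cv}[-2,2])=e^{-c_\tau}$ via Berry--Esseen with the $y\ge y_0$ threshold absorbing the error term, geometric decay across blocks, and the floor trick $\fl{k}\ge k-1$ together with $c_\tau<\log 2$ to pass to real $k$ and produce the factor $2$. One simplification you could adopt from the paper: you do not need the strong Markov property or any quantifier over starting points $s\in(-y,y)$. The event $\{|S^N_{m\fl{y}^2}|\le y \text{ for } m=1,\dots,k\}$ is contained in $\{|S^N_{m\fl{y}^2}-S^N_{(m-1)\fl{y}^2}|\le 2y \text{ for } m=1,\dots,k\}$, and the latter factorizes exactly as $\bigl(\P\{S^N_{\fl{y}^2}\in[-2y,2y]\}\bigr)^k$ by independence of the block increments; this is where the $2y$ threshold comes from. (Also note the normalization: the paper divides the centered block sum by $\fl{y}$ only, so the comparison law is $\Phi_{\sigma_N^2}$, the Gaussian of variance $\sigma_N^2$, on $[-2,2]$ --- not a standard Gaussian at level $2\sigma_N$ --- which is what makes the constant $c_\tau$ come out as stated via $\Phi_{\sigma_N^2}[-2,2]\le\Phi_{\cv}[-2,2]$.)

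The ``main obstacle'' you flag, however, is not an obstacle, and the coupling you propose to circumvent it would not prove the lemma as stated. After centering, $\P\{S^N_{\fl{y}^2}\in[-2y,2y]\}=\P\{\fl{y}^{-1}\wb S^N_{\fl{y}^2}\in[-2-\mu\fl{y},\,2-\mu\fl{y}]\}$, and Berry--Esseen is applied to the \emph{centered} walk, so the drift never enters the error term; it only shifts the target window. A centered Gaussian assigns to any interval of length $4$ at most the mass it assigns to $[-2,2]$, whatever the shift $-\mu\fl{y}$ is, so no smallness of $y^2|\mu|$ relative to $y$ is needed anywhere --- a large drift only pushes the window away from the Gaussian bulk and makes non-exit \emph{less} likely. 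By contrast, the domination $S^N_m\le\wb S^N_m$ transfers only \emph{downward} exits of the centered walk to $S^N$; using the lower side alone, the best per-block exit probability Berry--Esseen gives is about $\tfrac12(1-\Phi_{\cv}[-2,2])$ minus the error, so the per-block non-exit bound becomes $\tfrac12(1+\Phi_{\cv}[-2,2])$ \emph{plus} the Berry--Esseen error, which strictly exceeds $e^{-c_\tau}$ for every finite $y$. That route therefore yields a strictly smaller decay rate than the $c_\tau$ defined in \eqref{y_0} and leaves no room for the error term that $y_0$ is designed to absorb. Replace the coupling detour with the one-line observation about shifted intervals and your proof closes.
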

\begin{proof}
Let    $\widebar S^N_m=S^N_m-m\mu$ be   the centered walk.  Consider  an integer $k\ge 1$ and a real $y\ge 1$. 
Look at the process along time increments of size $\fl {y}^2$: 
	\begin{align*}
	\P(\tau_y>ky^2)&\leq \P(\tau_y>k\fl {y}^2)\leq\mathbb{P}( \,|S^N_{m\fl{y}^2}|\leq y \text{ for }  m=1,\dotsc,k\,)\\
	&\le \mathbb{P}( \,|S^N_{m\fl {y}^2}-S^N_{(m-1)\fl {y}^2}|\leq 2y \text{ for }  m=1,\dotsc,k\,)   \\
	&=\bigl(\mathbb{P}\{ \,S^N_{\fl {y}^2}\in [-2y,2y]\} \bigl)^k 
	= \bigl(\mathbb{P}\bigl\{ \,\fl {y}^{-1}\wb S^N_{\!\fl {y}^2}\in [-2-\mu \fl {y}\tspa,2-\mu \fl {y}]\bigr\} \bigl)^k  \\
	&\le \Bigl(\Phi_{\sigma_N^2} [-2-\mu \fl {y},2-\mu \fl {y}] + 3 \frac{\mu_{3,N}}{\sigma^{3}}\fl {y}^{-1} \Bigr)^k
	\le \bigl(\Phi_{\sigma_N^2} [-2,2] + 6\Cm  \sigma_*^{-3}y^{-1} \bigl)^k.
	\end{align*}	
	 The penultimate inequality is the Berry-Esseen Theorem. We use the version from \cite[Section 3.4.4]{durr} where the constant is 3. The last inequality is a simple property of a centered Gaussian.  Now for $y\ge y_0$
	 and  $c_\tau$ as above,
	 \begin{align*} 
	\sup_{\cv \leq \sigma^2\leq C_M}\Phi_{\sigma^2} [-2,2] + 3\Cm  \sigma_*^{-3} y^{-1} 
	& \le \Phi_{\cv } [-2,2] + 3\Cm  \sigma_*^{-3} y_0^{-1}\\
	&\le \tfrac{1}{2}(1+\Phi_{\cv } [-2,2])= e^{-c_\tau}.
	\end{align*} 
	 We have proved $ 
		\P(\tau_y>ky^2)\leq e^{-c_\tau k} $  for   $k\in\Z_{\ge0}$.
	Extend this to real   $k\in \R_{\ge0}$: 
	\begin{align*}
		\P(\tau_y>ky^2)&\leq \P(\tau_y>\fl {k}y^2)\leq e^{-c_\tau \fl{k}}\leq e^{-c_\tau( k-1)}=2(1+\Phi_{\cv } [-2,2])^{-1} e^{-c_\tau k}<2e^{-c_\tau k}.
	\qedhere\end{align*}
\end{proof}


Let $H_N=\mu^{-2}\wedge N$. By \eqref{r}
\begin{align}\label{HB}
	\cmu^{-2}(\log N)^6\leq H_N\leq N. 
\end{align}
   Define the truncated version of $\tau_y$ 
\begin{align*}
	\hat{\tau}_y=\tau_y\wedge H_N . 
\end{align*}
The following result shows that although the random walk $S^N_m$ has negative drift, up to times of order  $H_N$ it behaves similarly to an unbiased random walk in the following sense: if $y>0$ is not too small, but small compared to $H_N^{1/2}$, the probability that the random walk reaches level $y$ before   level $-y$ is close to $1/2$. Our choice of $H_N$ can be justified by decomposing the random walk into
\begin{align*}
	S^N_n=\sum_{i=1}^{n}\big(X_i^{N}-\mu\big)+n\mu.
\end{align*} 
For $\e>0$ small and $\abs\mu\ge N^{-1/2}$ (so that $H_N=\mu^{-2}$), 
\begin{align}\label{dt}
	(\e H_N)^{-1/2}S^N_{\e H_N}=(\e H_N)^{-1/2}\sum_{i=1}^{\e H_N}\big(X_i^{N}-\mu\big)+\e^{1/2}  .
\end{align} 
As
\begin{align*}
	(\e H_N)^{-1/2}\sum_{i=1}^{\e H_N}\big(X_i^{N}-\mu\big)\overset{d}\approx N(0,\sigma),
\end{align*}
we see that the left hand side of \eqref{dt} is dominated by the first term on the right hand side.  That is, up to time $\e H_N$ the random walk $S^N$ behaves approximately  like an unbiased random walk.


\begin{lemma}\label{lem:ep}  Let $y_0$ be as in \eqref{y_0}.  
	There exist  finite constants $N_0$ and $C_0$ such that, for $N\ge N_0$ and $y_0\le y\le (\log N)^{-1}H_N^{1/2}$, 
	\begin{align*}
	\P(S_{\hat{\tau}_y}\geq y)\geq \tfrac1{2}\Bigl[1-  C_0 H_N^{-\frac12}\big(y+(\log H_N)^2\big)-\frac2{\frq y}\log(e^{\frq }\Cm  H_N+1)
	\Bigr].   
	\end{align*}
	$C_0$ depends on $\frq ,\cv $ and $\Cm  $  while  $N_0$  depends on $\frq ,\cv $, $\cmu$  and $\Cm  $.  
\end{lemma}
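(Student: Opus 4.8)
The plan is to run an optional-stopping argument directly under $\P$ on the centered walk $\widebar S_m=S_m-m\mu$, which is a martingale. Since $\hat\tau_y=\tau_y\wedge H_N$ is a bounded stopping time, $\E[\widebar S_{\hat\tau_y}]=0$, i.e.\ $\E[S_{\hat\tau_y}]=\mu\,\E[\hat\tau_y]$. Because $|S_m|<y$ for $m<\tau_y$, on $\{\tau_y>H_N\}$ we have $|S_{H_N}|<y$; consequently, with $p=\P(S_{\tau_y}\ge y,\ \tau_y\le H_N)$, $q=\P(S_{\tau_y}\le -y,\ \tau_y\le H_N)$ and $r=\P(\tau_y>H_N)$, we get $p+q+r=1$ and $\{S_{\hat\tau_y}\ge y\}=\{S_{\tau_y}\ge y,\ \tau_y\le H_N\}$, so the quantity to bound below is exactly $p$. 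Splitting $\E[S_{\hat\tau_y}]$ over the three events gives $y(p-q)=\mu\,\E[\hat\tau_y]-A+B-R$, where $A,B\ge0$ are the expected overshoots at the upper and lower levels and $R=\E[S_{H_N}\ind_{\{\tau_y>H_N\}}]$ satisfies $|R|\le yr$. Hence $|p-q|\le\frac{|\mu|\,\E[\hat\tau_y]}{y}+\frac{A+B}{y}+r$, and from $p=\frac12\bigl((p+q)+(p-q)\bigr)\ge\frac12\bigl((1-r)-|p-q|\bigr)$ we obtain
\[
p\ \ge\ \tfrac12\Bigl[\,1-2r-\tfrac{|\mu|\,\E[\hat\tau_y]}{y}-\tfrac{A+B}{y}\,\Bigr].
\]

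It then remains to bound the three error terms. For the overshoots: on $\{S_{\tau_y}\ge y\}$ one has $S_{\tau_y}-y<X_{\tau_y}\le\max\{0,X_1,\dots,X_{H_N}\}$ because $S_{\tau_y-1}<y$, and symmetrically at the bottom with $-X_{\tau_y}$ in place of $X_{\tau_y}$; since \eqref{ed} makes both $\{X_i\}$ and $\{-X_i\}$ satisfy the hypothesis of Lemma~\ref{lem:exp} with $c_1=\frq$ and $C_1=\Cm$, we get $A,B\le\frq^{-1}\log(\Cm H_N+1)\le\frq^{-1}\log(e^{\frq}\Cm H_N+1)$, hence $\frac{A+B}{y}\le\frac{2}{\frq y}\log(e^{\frq}\Cm H_N+1)$. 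For the exit-time terms, Lemma~\ref{lem:ubc1} applies since $y\ge y_0$: writing $H_N=(H_N/y^2)\,y^2$ and using $y\le(\log N)^{-1}H_N^{1/2}$, so that $H_N/y^2\ge(\log N)^2$, gives $r\le2e^{-c_\tau H_N/y^2}\le2e^{-c_\tau(\log N)^2}$; summing the same tail bound over the integers gives $\E[\hat\tau_y]\le\E[\tau_y]\le Cc_\tau^{-1}y^2$, and since $H_N\le\mu^{-2}$ forces $|\mu|\le H_N^{-1/2}$, this yields $\frac{|\mu|\,\E[\hat\tau_y]}{y}\le Cc_\tau^{-1}H_N^{-1/2}y$. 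Finally, because $H_N\le N$ while $H_N\ge\cmu^{-2}(\log N)^6\to\infty$ by \eqref{HB}, for $N\ge N_0$ the super-polynomially small quantity $2r$ is at most $H_N^{-1/2}(\log H_N)^2$. Collecting these estimates gives the claim with $C_0$ a multiple of $c_\tau^{-1}$ (hence a function of $\cv$), the remaining dependence on $\frq$ and $\Cm$ entering through $y_0$ and explicitly in the overshoot term, and $N_0$ determined by the last domination step.

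The computational parts — converting the $\tau_y$-tail of Lemma~\ref{lem:ubc1} into $r\le2e^{-c_\tau(\log N)^2}$ and $\E[\hat\tau_y]\le Cc_\tau^{-1}y^2$, and checking $2e^{-c_\tau(\log N)^2}\le H_N^{-1/2}(\log H_N)^2$ for large $N$ — are routine, as is the tracking of the successive conditions on $N_0$. The only point that takes a moment's thought, and the crux in my view, is that the martingale identity should be applied under $\P$ directly rather than under the exponentially tilted measure $Q$: tilting removes the drift term ($\E^Q[S_{\hat\tau_y}]=0$) but introduces a Radon--Nikodym weight $M_N(\theta_0)^{\hat\tau_y}e^{-\theta_0 S_{\hat\tau_y}}$ that is not uniformly bounded below by a positive constant on $\{\hat\tau_y\le H_N\le\mu^{-2}\}$ (the factor $M_N(\theta_0)^{\hat\tau_y}$ alone is only $\gtrsim e^{-c(\Cm,\cv)}$), so it cannot produce the sharp prefactor $\tfrac12$. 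Keeping everything under $\P$ does keep that prefactor, and the cost — the drift term $\frac{|\mu|\,\E[\hat\tau_y]}{y}$ — is negligible exactly because $\E[\hat\tau_y]=O(y^2)$ while $|\mu|\le H_N^{-1/2}$ throughout the admissible range $y_0\le y\le(\log N)^{-1}H_N^{1/2}$.
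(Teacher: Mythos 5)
Your proof is correct, and it takes a genuinely different — and in fact more economical — route than the paper's. The paper passes to the exponentially tilted measure $Q$ (under which $S_n$ is an exactly mean-zero martingale), runs the optional-stopping/overshoot argument there, controls $Q(S_{\hat{\tau}_y}\in(-y,y))$ by a second application of Berry--Esseen, and then — this is the bulk of the work — transfers the bound back to $\P$ by splitting on $\{\hat{\tau}_y\le yH_N^{1/2}\}$, applying Cauchy--Schwarz with the second moment of the Radon--Nikodym derivative, and controlling the maximal increment $\max_i X^N_i$; that change of measure is where the constants $C_3$, $C_4$ and most of the conditions on $N_0$ arise. You instead keep everything under $\P$: optional stopping for the centered walk gives $\E[S_{\hat{\tau}_y}]=\mu\,\E[\hat{\tau}_y]$, the drift contribution $|\mu|\,\E[\hat{\tau}_y]/y$ is absorbed into the $C_0H_N^{-1/2}y$ term because $\E[\hat{\tau}_y]\le 2c_\tau^{-1}y^2$ (integrating the tail bound of Lemma \ref{lem:ubc1}) and $|\mu|\le H_N^{-1/2}$, and the event $\{\tau_y>H_N\}$ is again handled by Lemma \ref{lem:ubc1} — your bound $2e^{-c_\tau(\log N)^2}$ there is even stronger than the paper's Berry--Esseen bound $O(yH_N^{-1/2})$ on the corresponding $Q$-probability. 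The price is that you must bound the overshoot on both sides of the cylinder rather than only at the top, but since \eqref{ed} is symmetric in $\theta$, Lemma \ref{lem:exp} applies to $\{-X_i\}$ as well, and the resulting term $\tfrac{2}{\frq y}\log(\Cm H_N+1)$ is dominated by the one in the statement. Your $C_0$ ends up depending only on $\cv$ (through $c_\tau$), which is within what the lemma allows. One small quibble: your closing claim that the tilted-measure route ``cannot produce the sharp prefactor $\tfrac12$'' overstates the case — the paper does recover the prefactor $\tfrac12$ from $Q$, at the cost of the careful estimates \eqref{plb}--\eqref{dlb}; the accurate observation is rather that the tilt is unnecessary here, because the drift term it is designed to eliminate is already negligible on the time scale $\E[\hat{\tau}_y]=O(y^2)$ over the admissible range of $y$.
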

\begin{proof}
 The constant $C_0$ comes as follows in terms of the constants previously  introduced above and new constants $C_2, C_3, C_4$   introduced below in the course of the proof:  
		\be\label{C_0} \begin{aligned}
	C_0=C_2+C_4 &=2(\cva^{-1}+9e^{\frq}\cva^{-3}\Cm^{5/2}  ) + 2C_3c_\tau^{-1}+6\cm \\
	&=2(\cva^{-1}+9e^{\frq}\cva^{-3}\Cm^{\tspa 5/2}  ) + 2e^{2\Cm  \cm ^2+4\cm }c_\tau^{-1}+6\cm .
	\end{aligned}\ee

	Under the measure $Q$, $S_n$ is a mean-zero random walk and hence a martingale.   Furthermore,  ${\hat{\tau}_y}$ is a bounded stopping time. From this, 
	\begin{align*}
	0= \int S_{\hat{\tau}_y} dQ=\int_{S_{\hat{\tau}_y}\geq y} S_{\hat{\tau}_y} dQ+\int_{S_{\hat{\tau}_y}\leq-y} S_{\hat{\tau}_y} dQ+\int_{S_{\hat{\tau}_y}\in (-y,y)} S_{\hat{\tau}_y} dQ.  
	\end{align*}
	On the event $S_{\hat{\tau}_y}\geq y$, we have $\hat{\tau}_y=\tau_y$ and $S_{\hat{\tau}_y-1}<y\le S_{\hat{\tau}_y}=S_{\hat{\tau}_y-1}+X^N_{\hat{\tau}_y}\le y+X^N_{\hat{\tau}_y}$ and so 
	\begin{align*}
	\int_{S_{\hat{\tau}_y}\geq y} S_{\hat{\tau}_y} \,dQ
	&\leq \int_{S_{\hat{\tau}_y}\geq y}\big(y+X^N_{\hat{\tau}_y}\big)\,dQ  
	\le \int_{S_{\hat{\tau}_y}\geq y}\big(y+0\vee\max_{1\leq i\leq H_N} X^N_i\big)\,dQ  \\[4pt] 
	&\le y\tspb Q(S_{\hat{\tau}_y}\geq y)+\mu_{\rm max}(Q, H_N)
	\;\leq \; y\tspb Q(S_{\hat{\tau}_y}\geq y)+ 2\frq ^{-1}\log(e^{\frq }\Cm  H_N+1),
	\end{align*}
	where we applied Lemma \ref{lem:exp}  under the distribution $Q$ with $C_1=e^{\frq }\Cm  ,c_1=\tfrac12\frq $ from \eqref{QM4}.  
Combine the displays above to obtain 
\begin{align*}
	 Q(S_{\hat{\tau}_y}\geq y) &\geq  -\, y^{-1}\!\!\!\int\limits_{S_{\hat{\tau}_y}\leq-y} S_{\hat{\tau}_y} dQ \; - \; y^{-1} \!\!\!\!\!\!\int\limits_{S_{\hat{\tau}_y}\in (-y,y)} S_{\hat{\tau}_y} dQ  \; - \; 2\frq ^{-1}y^{-1}\log(e^{\frq }\Cm  H_N+1) \\[4pt] 
	 &\ge  Q(S_{\hat{\tau}_y}\leq-y)-   Q(S_{\hat{\tau}_y}\in(-y,y))  -  2\frq ^{-1}y^{-1}\log(e^{\frq }\Cm  H_N+1)	. 
		\end{align*}
Use 
\begin{align*}
	Q(S_{\hat{\tau}_y}\leq-y)=1-Q(S_{\hat{\tau}_y}\geq y)-Q(S_{\hat{\tau}_y}\in(-y,y))
	\end{align*}
to rewrite the above as 
\be\label{o:600}  
	\tspb Q(S_{\hat{\tau}_y}\geq y)\geq \tfrac12 [1-2\tspb Q(S_{\hat{\tau}_y}\in(-y,y))- 2\frq ^{-1}y^{-1}\log(e^{\frq }\Cm  H_N+1)].
	\ee 

%
%

It remains to bound the probability on the right. 
  $S_{\hat{\tau}_y}\in(-y,y)$ forces $\hat{\tau}_y=H_N$ and thereby another application of   the Berry-Esseen theorem, while using \eqref{qm},  \eqref{vq} and $y\ge y_0\ge 1$, gives 
	\begin{align*}
	Q\bigl\{S_{\hat{\tau}_y}\in(-y,y)\bigr\}&=Q\bigl\{H_N^{-1/2}S_{H_N}\in(-H_N^{-1/2}y,H_N^{-1/2}y)\bigr\}\\
	& \le  \Phi_{\cv }(-H_N^{-1/2}y,H_N^{-1/2}y) + 3\frac{e^{\frq}\Cm  }{2^{-3/2}\Cm^{-3/2}\cva^3} H_N^{-\frac12} \\
	& \le  2(2\pi\cv)^{-1/2}yH_N^{-1/2} + 9\frac{e^{\frq}\Cm  }{\Cm^{-3/2}\cva^3}  H_N^{-\frac12}  \leq (\cva^{-1}+9e^{\frq}\cva^{-3}\Cm^{5/2}  )\tsp y \tsp H_N^{-\frac12}\\
	& \equiv \tfrac12C_2 \tsp y \tsp H_N^{-\frac12}. 
	\end{align*}

	Rewrite \eqref{o:600}  as  
	\begin{align}\label{qlb}
	Q(S_{\hat{\tau}_y}\geq y)\geq \tfrac12[1-C_2 \tsp y \tsp H_N^{-\frac12}-2y^{-1}\frq ^{-1}\log(e^{\frq }\Cm  H_N+1)].
	\end{align}
It remains to switch from $Q$ back to the original distribution $\P$. 	
Recall the Radon-Nikodym derivative  $\rnder^{\theta}_n={M(\theta)^{-n}}{e^{\theta S_n}}$.  Introduce a temporary quantity  $G_0>1$ to be chosen precisely below.  Decompose according to the value of $\hat{\tau}_y$ 
and use Cauchy-Schwarz:  
	\begin{align}
	Q(S_{\hat{\tau}_y}\geq y)
	&=\E\big[  \rnder^{\theta_0}_{ {\hat{\tau}_y}}  
	(\ind_{S_{\hat{\tau}_y}\geq y,\tspb {\hat{\tau}_y}\leq G_0}+\ind_{S_{\hat{\tau}_y}\geq y,\tspb {\hat{\tau}_y}> G_0})\big] \nn \\
	\label{plb}
	 &\leq \E\big[\rnder^{\theta_0}_{ {\hat{\tau}_y}} \tspa \ind_{S_{\hat{\tau}_y}\geq y,\tspb {\hat{\tau}_y}\leq G_0}\big]  
	+\Big(\E\big[(\rnder^{\theta_0}_{ {\hat{\tau}_y}})^2\big]\Big)^\frac12\Big(\P\{S_{\hat{\tau}_y}\geq y,\tspb {\hat{\tau}_y}> G_0\}\Big)^{\frac12}.   
	\end{align}
	
		Let us first bound the second term on line  \eqref{plb}.  
	Note that  $\rnder^{\theta}_n$ is a $\P$-martingale and ${\hat{\tau}_y}$ is a stopping time bounded by $H_N$. Hence  $(\rnder^{\theta}_n)^2$ is a submartingale and we have 
	\begin{align*}
	&\Big(\E[(\rnder^{\theta_0}_{ \hat{\tau}_y})^2\tspa]\Big)^\frac12\Big(\P\{S_{\hat{\tau}_y}\geq y,{\hat{\tau}_y}> G_0\}\Big)^{\frac12}\\
	&\leq\Big(\E[(\rnder^{\theta_0}_{H_N})^2\tspa]\Big)^\frac12\Big(\P\{S_{\hat{\tau}_y}\geq y,{\hat{\tau}_y}> G_0\}\Big)^{\frac12}
	=\bigg(\frac{M(2\theta_0)}{M(\theta_0)^{2}}\bigg)^{H_N/2}\Big(\P\{S_{\hat{\tau}_y}\geq y,{\hat{\tau}_y}> G_0\}\Big)^{\frac12}. 
	\end{align*}	
	  
To bound the $M$-factor on the right, 
expand $M$ and use  \eqref{ed},  \eqref{cubic} and  $\mu<0$. In the numerator, for some $\eta\in(0,2\theta_0)$, 
\[  M(2\theta_0) = 1+\mu2\theta_0+2\mu_2\theta_0^2+\tfrac86M^{(3)}(\eta)\theta_0^3
\le 1 +2\mu_2\theta_0^2+\tfrac43\Cm\theta_0^3
\]  
and similarly in the denominator: 
	\be\label{o:256} \begin{aligned}
	&\Big[M(2\theta_0)M(\theta_0)^{-2}\Big]^{H_N/2}\le \Big(1 +2\mu_2\theta_0^2+\tfrac43\Cm\theta_0^3\Big)^{H_N/2}\Big(1+\mu\theta_0+\tfrac12\mu_2\theta_0^2-\tfrac16\Cm\theta_0^3\Big)^{-H_N}\\
	&\leq \Big(1+2 \Cm  \cm ^2\mu^2+\tfrac43 C_M\cm^3\abs\mu^3\Big)^{\tfrac12\mu^{-2}} \Big(1- \cm \mu^2-C_M\cm^3\abs\mu^3\Big)^{-\mu^{-2}} \\
	&\leq e^{2\Cm  \cm ^2+4\cm }
	\equiv C_3. 
	\end{aligned}\ee
	Above we  used $ H_N\leq \mu^{-2}$ and increased $N_0$ once more so that $N\ge N_0$ guarantees  $\tfrac23\cm\abs\mu\le 1$,  $ \cm \mu^2+ C_M\cm^3\abs\mu^3\le \tfrac12$ and  $C_M\cm^3\abs\mu\le 1$. Then we applied the bounds 
\begin{align*} \Big(1+2 \Cm  \cm ^2\mu^2+\tfrac43 C_M\cm^3\abs\mu^3\Big)^{\tfrac12\mu^{-2}} &\le e^{\Cm\cm^2(1+\frac23\cm\abs\mu)} \le e^{2\Cm  \cm ^2},\\
 	\Big(1-  \cm \mu^2- C_M\cm^3\abs\mu^3\Big)^{-\mu^{-2}} &\le \Big(1+2\cm \mu^2\bigl(1+C_M\cm^3\abs\mu\bigr)  \Big)^{\mu^{-2}}\le e^{2\cm (1+C_M\cm^3\abs\mu) }\le e^{4\cm }, 
	\end{align*}
	where the second line also used   $(1-a)^{-1}\le 1+2a$ for $a\in[0,\tfrac12]$.  
	Put \eqref{o:256} back up, set $G_0=yH_N^{1/2}$, and apply  Lemma \ref{lem:ubc1} (for which we use the assumption  $y\ge y_0$):  
	\begin{align}\label{qub2}
	\Big(\E[(\rnder^{\theta_0}_{ \hat{\tau}_y})^2\tspa]\Big)^\frac12\Big(\P\{S_{\hat{\tau}_y}\geq y,{\hat{\tau}_y}> G_0\}\Big)^{\frac12}\leq C_3\bigl(\P\{{\hat{\tau}_y}>G_0\}\bigr)^\frac12 \leq 2 C_3 e^{-c_\tau H_N^{1/2}y^{-1}}.
	\end{align}

	Next we bound the first term on line  \eqref{plb}.   Use $M(\theta_0)\le 1$.  Let $\cM_n=\max_{1\leq i  \leq n}X^N_i$.
	\be\label{qub2.4} \begin{aligned}
		 &\E\big[\rnder^{\theta_0}_{ {\hat{\tau}_y}}\tspa\ind_{S_{\hat{\tau}_y}\geq y,\tspb {\hat{\tau}_y}\leq G_0}\big] =  
		 \E\Big[\frac{e^{\theta_0S_{\hat{\tau}_y}}}{M(\theta_0)^{\hat{\tau}_y}}\ind_{S_{\hat{\tau}_y}\geq y,\tspb {\hat{\tau}_y}\leq G_0}\Big] \\
		 &
		 \leq \E\Big[\frac{e^{\theta_0 S_{\hat{\tau}_y}}}{M(\theta_0)^{G_0}}\ind_{S_{\hat{\tau}_y}\geq y,\tspb \cM_{H_N}\leq(\log H_N)^2}\Big]
		 +
		 \E\Big[\frac{e^{\theta_0 S_{\hat{\tau}_y}}}{M(\theta_0)^{\hat{\tau}_y}}\ind_{S_{\hat{\tau}_y}\geq y,\tspb \cM_{H_N}>(\log H_N)^2}\Big]\\
		 &\leq \E\Big[\frac{e^{\theta_0(y+\cM_{H_N})}}{M(\theta_0)^{G_0}}\ind_{S_{\hat{\tau}_y}\geq y,\tspb \cM_{H_N}\leq (\log H_N)^2}\Big]
		 +\E\Big[\frac{e^{\theta_0 S_{\hat{\tau}_y}}}{M(\theta_0)^{\hat{\tau}_y}}\ind_{S_{\hat{\tau}_y}\geq y,\tspb \cM_{H_N}>(\log H_N)^2}\Big]
	\end{aligned}\ee
	Let us first bound the second term. Using Cauchy-Schwarz, the bound
	\begin{align*}
		\P(\cM_{H_N}>t)\leq H_NC_Me^{-\frq t},
	\end{align*} 
	 the bound \eqref{o:256}, and the tail bound in \eqref{cubic}, it follows that
	\begin{align*}
		\E\Big[\frac{e^{\theta_0 S_{\hat{\tau}_y}}}{M(\theta_0)^{\hat{\tau}_y}}\ind_{S_{\hat{\tau}_y}\geq y,\tspb \cM_{H_N}>(\log H_N)^2}\Big]
		&\leq\Big(\E[(\rnder^{\theta_0}_{H_N})^2\tspa]\Big)^\frac12\Big(\P\{\cM_{H_N}>(\log H_N)^2\}\Big)^{\frac12}
		\\
		&\leq C_3H_N^{1/2}C^{1/2}_Me^{-\frac12\frq (\log H_N)^2}.
	\end{align*}
		The first term on the last  line of \eqref{qub2.4}  is bounded as follows, with $G_0=yH_N^{1/2}$. 
	\begin{align*}
		&\E\Big[\frac{e^{\theta_0(y+\cM_{H_N})}}{M(\theta_0)^{G_0}}\ind_{S_{\hat{\tau}_y}\geq y,\tspb \cM_{H_N}\leq (\log H_N)^2}\Big]\leq \E\Big[\frac{e^{\theta_0(y+(\log H_N)^2)}}{M(\theta_0)^{G_0}}\ind_{S_{\hat{\tau}_y}\geq y}\Big]\\
		&\leq \P(S_{\hat{\tau}_y}\geq y) {e^{\cm H_N^{-1/2}[y+(\log H_N)^2]}}M(\theta_0)^{ -\tspa  yH_N^{1/2}}\leq \P(S_{\hat{\tau}_y}\geq y) {e^{\cm H_N^{-1/2}[y+(\log H_N)^2]}}e^{\cm yH_N^{-1/2}}\\
		&=\P(S_{\hat{\tau}_y}\geq y) {e^{\cm H_N^{-1/2}[2y+(\log H_N)^2]}}\\
		&\leq \P(S_{\hat{\tau}_y}\geq y)\big[1+2\cm H_N^{-1/2}[2y+(\log H_N)^2]\big]
		\leq \P(S_{\hat{\tau}_y}\geq y)+2\cm H_N^{-1/2}[2y+(\log H_N)^2]. 
	\end{align*}
	 We used above Jensen's inequality in the form  $M(\theta_0)^{-yH_N^{1/2}}   \le e^{-\theta_0\mu yH_N^{1/2}}$,  the definition of $H_N$ in the form  $\abs\mu H_N^{1/2}\le 1$, 
	 and then $\theta_0\leq \cm \abs\mu\leq \cm H^{-1/2}$. 
	 Furthermore, by \eqref{HB} and our assumption $y\le (\log N)^{-1}H_N^{1/2}$ we have 
\[  \cm H_N^{-1/2}[2y+(\log H_N)^2]\le \cm (2+\cmu) (\log N)^{-1} \le \log 2\]  
 where we choose $N_0$ large enough so that the last inequality holds for $N\ge N_0$. Then we applied  the inequality $e^x\le 1+2x$ for $x\in[0,\log 2]$.  
 
 Going back to \eqref{qub2.4}, 
	  for  $N\ge N_0$, 
	\begin{align*}
		E\big[\rnder^{\theta_0}_{ {\hat{\tau}_y}}\tspa\ind_{S_{\hat{\tau}_y}\geq y,\tspb {\hat{\tau}_y}\leq G_0}\big] 
		&\leq 	\P(S_{\hat{\tau}_y}\geq y)+2\cm H_N^{-1/2}[2y+(\log H_N)^2]+C_3H_N^{1/2}C^{1/2}_Me^{-\frac12\frq(\log H_N)^2}\\
		&\leq 	\P(S_{\hat{\tau}_y}\geq y)+3\cm H_N^{-1/2}[2y+(\log H_N)^2].
	\end{align*}
The second inequality is guaranteed for example by choosing $N_0$ large enough so that $N\ge N_0$ implies 
\[ \cmu^{-2}(\log N)^6\ge \tspb e^{\frq^{-1}} \qquad\text{and}\qquad 
\cm C_3^{-1} \Cm^{-1/2}\bigl(\log [\cmu^{-2} (\log N)^6\tspa]\bigr)^2\ge e^{\frac12\frq^{-1}}.\]
This works  due to the lower bound \eqref{HB} on $H_N$ and because the function $f(x)=xe^{-\frac12\frq(\log x)^2}$ achieves its maximum $e^{\frac12\frq^{-1}}$ at $x=e^{\frq^{-1}}$ after which it decreases.

	Combine the above with  \eqref{qub2} on line \eqref{plb} to  get this upper bound:
		\be\label{dlb}\begin{aligned}
	Q(S_{\hat{\tau}_y}\geq y)
	&\leq \P(S_{\hat{\tau}_y}\geq y)+2C_3 e^{-c_\tau H_N^{1/2}y^{-1}}+3\cm H_N^{-1/2}[2y+(\log H_N)^2]\\
	&\le \P(S_{\hat{\tau}_y}\geq y)+2C_3 c_\tau^{-1} H_N^{-1/2}y+3\cm H_N^{-1/2}[2y+(\log H_N)^2]\\
	&\le \P(S_{\hat{\tau}_y}\geq y)+ C_4 \tsp H_N^{-1/2}[  y+(\log H_N)^2]  
	\end{aligned}\ee
	where  $C_4=2C_3c_\tau^{-1}+6\cm$. 
The second inequality above came from   $xe^{-x}\leq e^{-1}$  for $x\ge 0$.   
	Put  \eqref{dlb} and  \eqref{qlb} together to obtain the claim of the lemma. 
\end{proof}

By adjusting a constant we can replace $\hat{\tau}_y$ with $\tau_y$ in the previous estimate.


\begin{corollary}\label{cor:St}  Under the assumptions of Lemma \ref{lem:ep}, with $C_{10}=C_0+2c_\tau^{-1}$, 
	\begin{align}\label{qub4}
		\P(S_{\tau_y}\geq y)\geq 
	\tfrac1{2}\Bigl[1-  C_{10} H_N^{-\frac12}\big(y+(\log H_N)^2\big)-\frac2{\frq y}\log(e^{\frq }\Cm  H_N+1)
	\Bigr]
	\end{align}
\end{corollary}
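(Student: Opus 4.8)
The plan is to exploit that $\hat{\tau}_y=\tau_y\wedge H_N$ agrees with $\tau_y$ except on the event $\{\tau_y>H_N\}$, so the two probabilities appearing in Lemma \ref{lem:ep} and in \eqref{qub4} differ by at most $\P(\tau_y>H_N)$. Concretely, on $\{\tau_y\le H_N\}$ we have $\hat{\tau}_y=\tau_y$ and hence $S_{\hat{\tau}_y}=S_{\tau_y}$, so
\[
\P(S_{\tau_y}\ge y)\ \ge\ \P(S_{\tau_y}\ge y,\,\tau_y\le H_N)\ =\ \P(S_{\hat{\tau}_y}\ge y,\,\tau_y\le H_N)\ \ge\ \P(S_{\hat{\tau}_y}\ge y)-\P(\tau_y>H_N).
\]
Thus it remains to bound $\P(\tau_y>H_N)$ by a quantity that can be absorbed into the bracket of Lemma \ref{lem:ep}.

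For this I would apply Lemma \ref{lem:ubc1} with the real number $k=H_N/y^2\ge 0$ (legitimate since $y\ge y_0$), giving $\P(\tau_y>H_N)=\P(\tau_y>k y^2)\le 2e^{-c_\tau H_N/y^2}$. The key point is that the hypothesis $y\le(\log N)^{-1}H_N^{1/2}$ means $u:=H_N^{1/2}/y\ge\log N\ge 1$ (the last inequality needs only $N_0\ge 3$, already in force), so $H_N/y^2=u^2\ge u$. Hence, using $e^{-c_\tau u^2}\le e^{-c_\tau u}$ and then $t e^{-t}\le e^{-1}$ with $t=c_\tau u$,
\[
2u\,e^{-c_\tau H_N/y^2}\ \le\ 2u\,e^{-c_\tau u}\ =\ \tfrac{2}{c_\tau}(c_\tau u)e^{-c_\tau u}\ \le\ \tfrac{2}{c_\tau e}\ <\ \tfrac{1}{c_\tau},
\]
so that $\P(\tau_y>H_N)\le 2e^{-c_\tau H_N/y^2}\le (c_\tau u)^{-1}=c_\tau^{-1}H_N^{-1/2}y$. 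Combining with the display above gives $\P(S_{\tau_y}\ge y)\ge \P(S_{\hat{\tau}_y}\ge y)-c_\tau^{-1}H_N^{-1/2}y$.

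Finally, insert the bound of Lemma \ref{lem:ep} for $\P(S_{\hat{\tau}_y}\ge y)$, and note $y\le y+(\log H_N)^2$, so $c_\tau^{-1}H_N^{-1/2}y\le \tfrac12\cdot 2c_\tau^{-1}H_N^{-1/2}\big(y+(\log H_N)^2\big)$; the extra term then merges with the $C_0$‑term, turning its coefficient into $C_0+2c_\tau^{-1}=C_{10}$ and yielding exactly \eqref{qub4}. I do not expect a genuine obstacle here: the only step requiring mild care is converting the (essentially Gaussian) tail $2e^{-c_\tau H_N/y^2}$ from Lemma \ref{lem:ubc1} into a term of the correct order $H_N^{-1/2}y$ rather than $H_N^{-1}y^2$, which is precisely where the constraint $y\le(\log N)^{-1}H_N^{1/2}$ (i.e.\ $H_N^{1/2}/y\ge 1$) is used; one should also check that no further enlargement of $N_0$ is needed, which is the case.
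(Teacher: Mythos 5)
Your proposal is correct and follows essentially the same route as the paper: the identity $\P(S_{\tau_y}\ge y)\ge \P(S_{\hat\tau_y}\ge y)-\P(\tau_y>H_N)$, the tail bound from Lemma \ref{lem:ubc1} with $k=H_N/y^2$ converted via $H_N/y^2\ge H_N^{1/2}/y\ge1$ and $te^{-t}\le e^{-1}$ into a term of order $c_\tau^{-1}H_N^{-1/2}y$, and absorption into the $C_0$-coefficient to give $C_{10}=C_0+2c_\tau^{-1}$. If anything, your handling of the factor $2$ in Lemma \ref{lem:ubc1} (using the spare $e^{-1}$ from $te^{-t}\le e^{-1}$) is slightly more careful than the paper's display \eqref{o:290}, which silently drops that factor.
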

\begin{proof}   The assumption $y_0\le y\le H_N^{1/2}$ implies that  Lemma \ref{lem:ubc1}  applies to give  
\be\label{o:290}   \P(\tau_y>H_N)   \le e^{-c_\tau H_Ny^{-2}} \le e^{-c_\tau H_N^{1/2}y^{-1}}
\le c_\tau^{-1} H_N^{-1/2}y.\ee
 The claim then  comes from Lemma \ref{lem:ep} and 
	$
		\P(S_{\tau_y}\geq y)\geq \P(S_{\hat{\tau}_y}\geq y)-\P(\tau_y>H_N).  
	$
\end{proof}


For $w>0$   truncate:  
\begin{align*}
\wh X^{N,w}_i=X^{N}_i\ind_{\{X^N_i\geq -w\}}-w\ind_{\{X^N_i< -w\}}
\qquad\text{and}\qquad 
\wh S^{N,w}_n=\sum_{i=1}^{n}\wh X^{N,w}_i.
\end{align*}
Define
\begin{align*}
t_y=\inf\{m\ge 1:|\wh S^{N,w}_m|\geq y\}.
\end{align*}
We  transfer bound \eqref{qub4} to the truncated walk $\widehat S$. The reason is that the proof of the forthcoming Lemma \ref{lem:ub} is easier  for the truncated RW.  

\begin{corollary}\label{cor:plb}  Under the assumptions of Lemma \ref{lem:ep}, with $C_{11}=C_0+4c_\tau^{-1}$, 
	\begin{align*}
	\P(\wh S^{N,w}_{t_y}\geq y)\geq  	\tfrac1{2}\Bigl[1- C_{11} H_N^{-\frac12}\big(y+(\log H_N)^2\big)-\frac2{\frq y}\log(e^{\frq }\Cm  H_N+1)-H_NC_Me^{-\frq w}
	\Bigr]. 
	\end{align*}
	\begin{proof}
		Note that 
		\begin{align}\label{qub3}
		&\P\big(\wh S^{N,w}_m\neq S^N_m \  \text{ for some $1\leq m\leq H_N$}\big)= \P\big(\wh X^{N,w}_i\neq X^N_i \quad \text{for some $1\leq i\leq H_N$}\big) \\
		&=\P\big( \inf_{1\leq i\leq H_N}X^N_i<-w\big)\leq H_NC_Me^{-\frq w}.\nonumber
		\end{align}
		 Moreover, 
		\begin{align}\label{qub5}
			\P(\wh S^{N,w}_{t_y}\geq y)
			&\geq \P(S^N_{\tau_y}\geq y,\,\tau_y\leq H_N, \,\wh S^{N,w}_m= S^N_m \quad \text{for all $1\leq m\leq H_N$})\nonumber\\
			&\geq\P(S^N_{\tau_y}\geq y)-\P(\tau_y> H_N)-\P(\wh S^{N,w}_m\ne  S^N_m \quad \text{for some $1\leq m\leq H_N$})\nonumber\\
			&\geq\P(S^N_{\tau_y}\geq y)- c_\tau^{-1} H_N^{-1/2}y-H_NC_Me^{-\frq w},
		\end{align}
		where we used \eqref{qub3} and \eqref{o:290}.
		Combine the above with  \eqref{qub4}  to obtain the result.
	\end{proof}
\end{corollary}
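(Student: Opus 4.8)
The plan is to compare the truncated walk $\wh S^{N,w}$ with the original walk $S^N$ directly, without any new probabilistic input. The two trajectories coincide on the time interval $[1,H_N]$ unless one of the first $H_N$ steps falls below $-w$, and the left tail of $X^N_1$ is exponentially small (this follows from the boundedness of $M_N$ on $[-\frq,\frq]$, exactly as in the third line of \eqref{cubic}). So, up to an error of size $H_NC_Me^{-\frq w}$, the pair $(t_y,\wh S^{N,w}_{t_y})$ behaves exactly like $(\tau_y,S^N_{\tau_y})$, and the corollary should drop out of the estimate \eqref{qub4} of Corollary \ref{cor:St} after adding two explicit error terms.

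Concretely, I would first record the coupling bound
\[
\P\bigl(\wh S^{N,w}_m\neq S^N_m\ \text{for some}\ 1\le m\le H_N\bigr)=\P\bigl(\inf_{1\le i\le H_N}X^N_i<-w\bigr)\le H_NC_Me^{-\frq w},
\]
via a union bound and the exponential left tail. Next, on the event $\{\wh S^{N,w}_m=S^N_m\ \text{for all}\ 1\le m\le H_N\}\cap\{\tau_y\le H_N\}$ the two walks agree throughout $[1,\tau_y]$, hence $t_y=\tau_y$ and $\wh S^{N,w}_{t_y}=S^N_{\tau_y}$; intersecting with $\{S^N_{\tau_y}\ge y\}$ then forces $\wh S^{N,w}_{t_y}\ge y$. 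A union bound therefore yields
\[
\P(\wh S^{N,w}_{t_y}\ge y)\ge\P(S^N_{\tau_y}\ge y)-\P(\tau_y>H_N)-H_NC_Me^{-\frq w}.
\]

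For the term $\P(\tau_y>H_N)$ I would apply Lemma \ref{lem:ubc1} with $k=H_Ny^{-2}$: since $y\ge y_0$ and $y\le H_N^{1/2}$ hold under the hypotheses of Lemma \ref{lem:ep} (for $N_0$ large), $\P(\tau_y>H_N)\le 2e^{-c_\tau H_Ny^{-2}}\le 2e^{-c_\tau H_N^{1/2}y^{-1}}\le c_\tau^{-1}H_N^{-1/2}y$, using $H_N^{1/2}y^{-1}\ge 1$ and $2te^{-t}\le 2e^{-1}<1$. Substituting this together with \eqref{qub4} (which carries the constant $C_{10}=C_0+2c_\tau^{-1}$) into the previous display and pulling the factor $\tfrac12$ outside the bracket, the extra term $c_\tau^{-1}H_N^{-1/2}y\le c_\tau^{-1}H_N^{-1/2}(y+(\log H_N)^2)$ raises the coefficient to $C_{11}=C_0+4c_\tau^{-1}$, while the truncation probability enters as the additive $H_NC_Me^{-\frq w}$ term (an absolute factor of $2$ produced by the rescaling being harmless since $w$ is free). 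This is precisely the asserted inequality.

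I do not anticipate a genuine obstacle: the argument is a one-line coupling followed by union bounds, and all the analytic content is already carried by Lemma \ref{lem:ep} and Corollary \ref{cor:St}. The only things requiring care are the constant-chasing needed to land exactly on $C_{11}$, and the verification that the standing hypotheses indeed guarantee $y_0\le y\le H_N^{1/2}$, so that Lemma \ref{lem:ubc1} converts $\P(\tau_y>H_N)$ into a negative power of $H_N$ rather than a bare exponential.
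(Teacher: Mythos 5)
Your proposal is correct and follows essentially the same route as the paper: the same union-bound coupling estimate $\P(\wh S^{N,w}_m\neq S^N_m \text{ for some } m\le H_N)\le H_NC_Me^{-\frq w}$, the same three-term lower bound via the event $\{\tau_y\le H_N\}\cap\{\text{walks agree}\}$, and the same conversion of $\P(\tau_y>H_N)$ into $c_\tau^{-1}H_N^{-1/2}y$ via Lemma \ref{lem:ubc1}, before substituting \eqref{qub4}. The constant bookkeeping landing on $C_{11}=C_0+4c_\tau^{-1}$ also matches.
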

We turn to the main argument of the proof of Theorem \ref{thm:lm}, that is, to show that the probability of the random walk $\wh{S}_m$ to hit the level $x$ before hitting the level $-\e H^{1/2}$ is close to $x|\mu|$.  This gives  rise to  the  error term in \eqref{mr}.   We sketch the reasoning.   

  Let us try to hit the level $x>0$ starting from the origin. By Corollary \ref{cor:plb} there is a probability $\approx1/2$ to hit  $x$ before  hitting $-x$. Suppose we failed and hit $-x$ first. We have another  chance to hit $x$ by going $2x$ upward from the level $-x$. By Corollary \ref{cor:plb} the probability of going $2x$ up to the level $x$ before going $2x$ down to the level $-4x$ is $\approx 1/2$. We continue this way until we either hit the level $x$ or the level $-\e  H_N^{1/2}$. How many trials to hit $x$  do we have before we hit $-\e  H_N^{1/2}$? Approximately $K=\log_2(x^{-1}\e  H_N^{1/2})$. The trials are independent  and so the probability of hitting the level $-\e  H_N^{1/2}$ before hitting the level $x$ is $\approx 2^{-K}=Cx|\mu|$, which is what we seek.  
  
  We introduce the notation to  make the  sketch precise.  See Figure \ref{fig: points} for an illustration. 
  

Define  $K=\fl{\log_2 ({x}^{-1}(\log N)^{-1}  H_N^{1/2})}-2$. 
For $i\ge 0$ set $L_i= 2^{i+2}-3$. Inductively these satisfy  $L_0=1$ and $L_i=2L_{i-1}+3$.  Furthermore, 
\begin{align*}
	xL_K\leq (\log N)^{-1}   H_N^{1/2}.
\end{align*}
Define the stopping times
\begin{align*}
T_0&=\inf\{n:|\wh S^{N,x}_n|\geq xL_0\}\\
\text{and } \quad 
T_i&=\inf\{n\ge T_{i-1}:\wh S^{N,x}_n\leq -xL_i \text{ or }\wh S^{N,x}_n\geq x \}. 
\end{align*}
Note that $T_i=T_{i-1}$ is possible.  

\begin{lemma}\label{lem:ub}
	There exist finite constants $C_{12}$ and $N_0$ such that  for  $N\ge N_0$ and $x\ge(\log N)^2$,  
	\begin{align*}
	\P\bigl(\,\max_{1 \leq m\leq T_K} \wh S^{N,x}_m< x\bigr)\leq  C_{12}\tspb x\tspb (\log N)H_N^{-1/2},
	\end{align*}
	where
	\begin{align*}
		C_{12}=4\exp\bigl\{ 4(C_0+4c_\tau^{-1})(1+\cmu) + 8\frq^{-1}  \bigl(1+ \log(e^{\frq }\Cm +1)\bigr)     +4 \Cm \bigr\} 
	\end{align*}
	and $C_0$ in the expression above  is from \eqref{C_0}. 
\end{lemma}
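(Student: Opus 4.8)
The plan is to make rigorous the sketch preceding the lemma: decompose the event $\{\max_{1\le m\le T_K}\wh S^{N,x}_m<x\}$ into a sequence of $K$ nested ``failed trials'' and exploit independence via the strong Markov property of the truncated walk $\wh S^{N,x}$. Concretely, let $A_i$ be the event that the walk, upon first leaving the interval $(-xL_{i-1},x)$, exits through the bottom, i.e.\ $\wh S^{N,x}_{T_i}\le -xL_i$ (recall $L_i=2L_{i-1}+3$, so that descending from level $-xL_{i-1}$ a further distance $x(L_i+L_{i-1})=x(2L_{i-1}+3+L_{i-1})$... actually the relevant increment to check is that going from $-xL_{i-1}$ to $-xL_i$ is a downward move of size $x(L_i-L_{i-1})=x(L_{i-1}+3)$ while going up to $x$ is a move of size $x(L_{i-1}+1)$, so the symmetric half-width is $y_i\asymp xL_{i-1}$). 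On the event $\bigcap_{i=1}^K A_i$ the walk never reaches $x$ before time $T_K$, and conversely $\{\max_{1\le m\le T_K}\wh S_m<x\}\subseteq\bigcap_{i=1}^K A_i$. Then I would write $\P\bigl(\bigcap_{i=1}^K A_i\bigr)=\prod_{i=1}^K\P\bigl(A_i\mid\bigcap_{j<i}A_j\bigr)$ and bound each conditional factor.

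The key step is to bound $\P(A_i\mid\cdots)$ above by something like $1-\tfrac12(1-\delta_i)$ where $\delta_i$ is the error term from Corollary~\ref{cor:plb}. By the strong Markov property at time $T_{i-1}$, conditionally on the past and on the event $\bigcap_{j<i}A_j$ (which forces $\wh S_{T_{i-1}}=-xL_{i-1}$, up to the overshoot which the truncation by $x$ keeps bounded — this is exactly why the truncated walk was introduced), the walk restarts and we need the probability that a fresh truncated walk reaches $+x(L_{i-1}+1)$ before $-x(L_{i-1}+3)$... i.e.\ essentially reaches the top of a cylinder of half-width $y_i=x\cdot\Theta(L_{i-1})\le xL_K\le(\log N)^{-1}H_N^{1/2}$. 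Provided $y_i\ge y_0$ — which holds once $x\ge(\log N)^2$ and $N\ge N_0$, since $y_0$ is an absolute constant and $xL_0=x\ge(\log N)^2$ — Corollary~\ref{cor:plb} gives that this probability is at least $\tfrac12(1-\delta_i)$ with
\[
\delta_i= C_{11}H_N^{-1/2}\bigl(y_i+(\log H_N)^2\bigr)+\tfrac{2}{\frq y_i}\log(e^{\frq}\Cm H_N+1)+H_NC_Me^{-\frq x}.
\]
Here I must be slightly careful: the roles of $\pm y$ in Corollary~\ref{cor:plb} are symmetric up to the overshoot bookkeeping, and the asymmetry $L_i-L_{i-1}\ne L_{i-1}$ only helps (more room below than needed above makes ``hit top'' easier), so the bound applies with $y_i$ comparable to $xL_{i-1}$; the constant $C_0$ absorbs the factor-of-a-few slack. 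Hence $\P(A_i\mid\cdots)\le 1-\tfrac12(1-\delta_i)=\tfrac12+\tfrac12\delta_i$, and $\P\bigl(\bigcap A_i\bigr)\le\prod_{i=1}^K(\tfrac12+\tfrac12\delta_i)=2^{-K}\prod_{i=1}^K(1+\delta_i)\le 2^{-K}\exp\bigl(\sum_{i=1}^K\delta_i\bigr)$.

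It then remains to show (i) $2^{-K}\le C\,x(\log N)H_N^{-1/2}$, which is immediate from the definition $K=\fl{\log_2(x^{-1}(\log N)^{-1}H_N^{1/2})}-2$ (giving $2^{-K}\le 8\,x(\log N)H_N^{-1/2}$), and (ii) $\sum_{i=1}^K\delta_i$ is bounded by an absolute constant, so that $\exp(\sum\delta_i)$ contributes only to $C_{12}$. For the first two pieces of $\delta_i$: $\sum_i y_i\asymp x\sum_i L_{i-1}\asymp xL_K\le(\log N)^{-1}H_N^{1/2}$ so $H_N^{-1/2}\sum_i y_i\lesssim(\log N)^{-1}\le 1$; $H_N^{-1/2}K(\log H_N)^2$ is small because $K\lesssim\log N$ while $H_N\ge\cmu^{-2}(\log N)^6$ by \eqref{HB}, so $H_N^{-1/2}(\log H_N)^2\lesssim(\log N)^{-3}(\log\log N)^2$ times a constant, times $K\lesssim\log N$, still $\to0$; $\sum_i \frac{2}{\frq y_i}\log(e^{\frq}\Cm H_N+1)$ is a geometric-type sum since $y_i\asymp x2^i$, bounded by $\frac{C}{\frq x}\log(\cdots H_N)\le\frac{C}{\frq(\log N)^2}\cdot O(\log N)\to0$ using $x\ge(\log N)^2$; and $KH_NC_Me^{-\frq x}$ is super-polynomially small since $x\ge(\log N)^2$. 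Tracking the multiplicative constants $C_{11}=C_0+4c_\tau^{-1}$, the factor $(1+\cmu)$ from $H_N\le N$ bookkeeping, the $8\frq^{-1}(1+\log(e^{\frq}\Cm+1))$ from the logarithmic term, and $4\Cm$ from the tail term, assembles exactly into the stated $C_{12}$, and all the smallness conditions are collected into the single requirement $N\ge N_0$. The main obstacle I anticipate is the careful bookkeeping in step (i)–(ii): matching the geometric growth of the $y_i$ against the constraints $x\ge(\log N)^2$ and $H_N\ge\cmu^{-2}(\log N)^6$ so that every term in $\sum_i\delta_i$ is genuinely bounded by an absolute constant, and verifying that the overshoot of the \emph{truncated} walk at each $T_i$ is controlled (bounded by $x$, hence negligible relative to $xL_i$), which is the whole reason Corollaries~\ref{cor:St} and~\ref{cor:plb} were stated for $\wh S^{N,w}$ rather than $S^N$.
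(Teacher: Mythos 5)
Your proposal follows essentially the same route as the paper's proof: the same decomposition into $K$ nested trials at the levels $-xL_i$, the same use of the truncation to pin the overshoot at $T_{i-1}$ to within $x$ so that each trial becomes a symmetric exit problem of half-width $x(L_{i-1}+2)$ handled by Corollary~\ref{cor:plb}, and the same final bound $2^{-K}\exp\bigl(\sum_i\delta_i\bigr)$ with $2^{-K}\lesssim x(\log N)H_N^{-1/2}$. The only (immaterial) difference is in bookkeeping: you bound $\sum_i\delta_i$ by exploiting the geometric growth of the $y_i$, whereas the paper bounds each $\delta_i$ uniformly by $C_A(\log N)^{-1}$ and uses $K\lesssim\log N$.
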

	\begin{proof}
Since  $C_0\ge 2$, we have $C_{12}\ge 4e^8\ge 2^{10}$. Then we can assume that $x\le 2^{-10}  (\log N)^{-1}H_N^{1/2}$, for otherwise the bound on the probability is $> 1$. This   guarantees that $K\ge 8$.  It also implies that unless    $\abs\mu\le 2^{-10}(\log N)^{-3}$, the result is trivial. 

Since $\wh X^{N,x}_i\ge-x$, 
 \be\label{coa} \{\wh S^{N,x}_{T_i}\leq-xL_i\}=\{-x(L_i+1)<\wh S^{N,x}_{T_i}\leq-xL_i\}\ee
  and 
\begin{align*}
	\big\{\wh S^{N,x}_{T_i}\leq-xL_i,\wh S^{N,x}_{T_{i+1}}\leq-xL_{i+1}\big\}\subseteq \{T_i<T_{i+1}\}.  
\end{align*}	
		Note that
		\begin{align}\label{e}
			E\equiv \Big\{\max_{1 \leq m\leq T_K} \wh S^{N,x}_m< x\Big\}\subseteq \bigcap_{1\leq i \leq K} \{\wh S^{N,x}_{T_i}\leq -x L_i\}. 
		\end{align}
		Due to \eqref{coa}
		\be\label{sm}\begin{aligned}
			&\P\big(\wh S^{N,x}_{T_0}\leq -xL_0,...,\wh S^{N,x}_{T_{i-1}}\leq -xL_{i-1},\wh S^{N,x}_{T_i}\leq -xL_i\big)\\
			&=\P\big(\wh S^{N,x}_{T_i}\leq -xL_i|\wh S^{N,x}_{T_0}\leq -xL_0,...,\wh S^{N,x}_{T_{i-1}}\leq -xL_{i-1}\big) \\
			&\qquad\qquad \cdot \; \P\big(\wh S^{N,x}_{T_0}\leq -xL_0,...,\wh S^{N,x}_{T_{i-1}}\leq -xL_{i-1}\big)\\
			&\leq\P\big(\wh S^{N,x}_{T_i}\leq -xL_i|\wh S^{N,x}_{T_0}\leq -xL_0,...,\wh S^{N,x}_{T_{i-1}}=-x(L_{i-1}+1)\big)\\
			&\qquad\qquad \cdot \;\P\big(\wh S^{N,x}_{T_0}\leq -xL_0,...,\wh S^{N,x}_{T_{i-1}}\leq -xL_{i-1}\big)\\
			&=\P\big(\wh S^{N,x}_{t_{x(L_{i-1}+2)}}\leq -x(L_{i-1}+2)\big)\P\big(\wh S^{N,x}_{T_0}\leq -xL_0,...,\wh S^{N,x}_{T_{i-1}}\leq -xL_{i-1}\big). 
		\end{aligned}\ee
		 The last equality   used the definition of the stopping time $t_y$, the definition of $L_i$, and the  Markov property.  For $1\leq i \leq K$ define the events 
		\begin{align*}
		A^N_i=\{\wh S^{N,x}_{t_{x(L_{i-1}+2)}}\leq -x(L_{i-1}+2)\}.
		\end{align*}
		Applying  \eqref{sm} to \eqref{e} repeatedly, 
		\begin{align*}
			\P(E)\leq \P\Big(\bigcap_{1\leq i \leq K} \{\wh S^{N,x}_{T_i}\leq -xL_i\}\Big)\leq \prod_{1\leq i \leq K}\P(A^N_i).
		\end{align*}
		Let $x\ge (\log N)^2$.   Recall  that by \eqref{HB}, $(\log H_N)^2H_N^{-1/2}\leq \cmu(\log N)^{-1}$ and $H_N\le N$.  Apply Corollary \ref{cor:plb} with $w=x$ and $y_i=x(L_{i-1}+2) \in [x, (\log N)^{-1}  H_N^{1/2}]$  for $i=1,\dotsc,K$ and  $N\ge N_0$ to get this estimate: 
			\begin{align*}
			\P(A^N_i)&\leq \tfrac1{2}\Bigl[1+C_{11} H_N^{-\frac12}
			\big(y_i+(\log H_N)^2\big)
			+ \frac2{\frq y_i}\log(e^{\frq }\Cm  H_N+1) 
			+H_NC_Me^{-\frq x}
			\Bigr]\\
			&\leq \tfrac1{2}\Bigl[1+C_{11}(1+\cmu)  (\log N)^{-1}   
			+ \frac{2\log(e^{\frq }\Cm  N+1)}{\frq (\log N)^2} 
			+C_M N^{1-\frq (\log N)^2}
			\Bigr]\\
			&\leq  \tfrac1{2}\bigl[1+C_A(\log N)^{-1}],
		\end{align*}
		where we set 
		\begin{align*}
			C_A=C_{11}(1+\cmu) + 2\frq^{-1}  \bigl(1+ \log(e^{\frq }\Cm +1)\bigr)     + \Cm 
		\end{align*}
and if necessary we increase $N_0$ further so that $N^{1-\frq (\log N)^2}\le (\log N)^{-1}$ for $N\ge N_0$.  
 	
	
Continue with the above estimate, 
		\begin{align*}
			\P(E)&\leq  \prod_{i=1}^K\P(A_i^N)\leq  \Big(\tfrac1{2}\bigl[1+C_A(\log N)^{-1}]\Big)^K\\
			&=\Big(\tfrac1{2}[1+C_A(\log N)^{-1}]\Big)^{\fl{\log_2 ({x}^{-1}(\log N)^{-1}  H_N^{1/2})}-2}\\
			&\leq 4x(\log N)  H_N^{-1/2} \tsp[1+C_A(\log N)^{-1}]^{\log_2\! N}\\
			&\leq 4e^{4C_{\!A}}x(\log N)  H_N^{-1/2}=4e^{4C_{\!A}}x(\log N)(|\mu|\vee N^{-1/2}),
		\end{align*}
where we used $\log_2\!N=\frac{\log N}{\log 2}\leq 4\log N$.		
	\end{proof}

We are ready to prove Theorem \ref{thm:lm}. By Lemma \ref{lem:ub},   by the time $\wh S$ hits the level $(\log N)^{-1}  H_N^{1/2}$, with high probability  it has hit level $x$ as well.   It remains  to verify  the two points below. 
\begin{enumerate} [(i)] 
	\item $\wh S$ is close to $S$ on the time interval $[1,N]$. This follows from a union bound and the  exponential tail of $X^{N}_1$.
	\item With high probability by time $N$ we hit the boundary of the cylinder of width  $(\log N)^{-1}  H_N^{1/2}$. This follows from Lemma \ref{lem:ubc1}.
\end{enumerate}

\begin{figure}[t]
	\centering%
	\begin{tikzpicture}[scale=0.8, every node/.style={transform shape}]
	\draw  (0,1) -- (10,1);
	\node [scale=1][left] at (0,1) {$x$};
	\draw  (0,0) -- (10,0);
	\node [scale=1][left] at (0,0) {$0$};
	\draw  (0,-1) -- (10,-1);
	\node [scale=1,red][left] at (0,-1) {$-xL_0=-x$};
	\draw  (0,-2) -- (10,-2);
	\node [scale=1][left] at (0,-2) {$-2x$};
	\draw  (0,0) -- (10,0);
	\draw  (0,-3) -- (10,-3);
	\node [scale=1][left] at (0,-3) {$-3x$};
	\draw  (0,-4) -- (10,-4);
	\node [scale=1][left] at (0,-4) {$-4x$};
	\draw  (0,-5) -- (10,-5);
	\node [scale=1,red][left] at (0,-5) {$-xL_1=-5x$};
	\node [scale=1][] at (0,-6) {$\vdots$};
	\draw [red] (0,-7) -- (10,-7);
	\node [scale=1,red][left] at (0,-7) {$-xL_K=(\log N)^{-1}  H_N^{1/2}$};
	\draw [dashed,red] (0,0) -- (2,1);
	\draw  (0,0) -- (2,-1.5);
	\draw [dashed,red] (2,-1.5) -- (6.5,1);
	\node [scale=1,][left] at (1.8,-1.6) {$\wh S^N_{T_0}$};
	\draw  (2,-1.5) -- (5.5,-5.3);
	\node [scale=1,][left] at (5.3,-5.5) {$\wh S^N_{T_1}$};
	\end{tikzpicture}
	\caption{\small By the time the random walk $\wh S^N$ exits the cylinder of radius $(\log N)^{-1}  H_N^{1/2}$ it has had about $K$ independent opportunities to hit the level $x$, each with probability close to $1/2$.}\label{fig: points}
\end{figure}
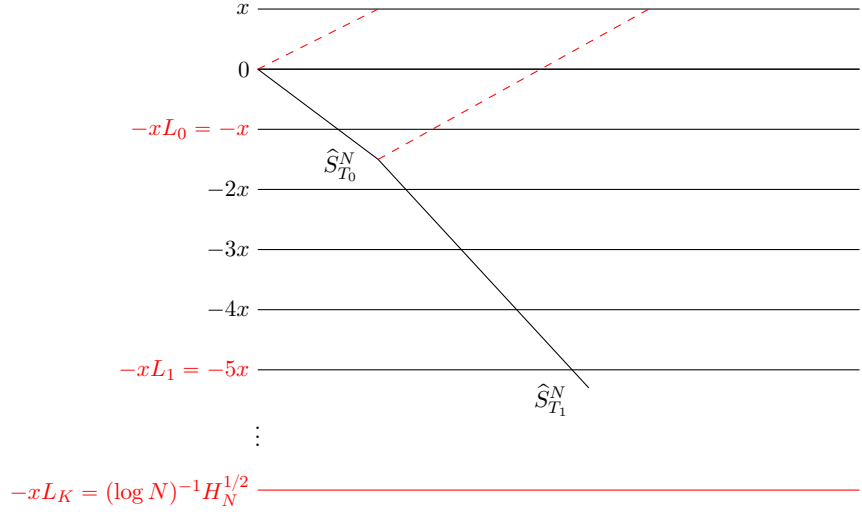	
\begin{proof}[Proof of Theorem \ref{thm:lm}]
	Consider $x\ge (\log N)^2$.  Observe that
	\begin{align*}
		\Big\{\max_{1\leq m \leq N}|\wh S^{N,x}_m|\geq (\log N)^{-1}  H_N^{1/2},\max_{1 \leq m\leq T_K} \wh S^{N,x}_m> x\Big \}\subseteq \Big\{\max_{1\leq m \leq N}\wh S^{N,x}_m\geq x\Big\}.
	\end{align*}
	Indeed, on the event $\max_{1\leq m \leq N}|\wh S^{N,x}_m|\geq (\log N)^{-1}  H_N^{1/2} {\ge xL_K}$ we have $T_K\leq N$.
	
	Next,  
	\begin{align}\label{cS}
	\P\big(\wh S^{N,x}_m\neq S^N_m \quad \text{for some $1\leq m\leq N$}\big)
	&=\P\big(X^N_i< -x \quad \text{for some $1\leq i\leq N$}\big)\\
	&\leq  \Cm Ne^{-\frq x} \leq \Cm Ne^{-\frq (\log N)^2}\nonumber.
	\end{align}
	By Lemma \ref{lem:ubc1}, \eqref{cS} and  Lemma \ref{lem:ub}, 
		\begin{align*}
		&\P\bigl(\,\max_{1\leq m \leq N}\wh S^{N,x}_m\geq x\bigr)
		\geq 1-\P\bigl(\,\max_{1\leq m \leq N}|\wh S^{N,x}_m|<(\log N)^{-1}  H_N^{1/2}\bigr)-\P\bigl(\,\max_{1 \leq m\leq T_K} \wh S^{N,x}_m\leq  x\bigr)\\
		&\ge 1-\Big[\P\bigl(\,\max_{1\leq m \leq N}| S^{N}_m|<(\log N)^{-1}  H_N^{1/2}\bigr)+\P\big(\wh S^{N,x}_m\neq S^N_m \quad \text{for some $1\leq m\leq N$}\big)\Big]\\
		&\qquad\qquad 
		-\P\bigl(\,\max_{1 \leq m\leq T_K} \wh S^{N,x}_m\leq  x\bigr)\\
		&=1-\Big[\P\bigl(\tau_{(\log N)^{-1}  H_N^{1/2}}>N\bigr)+\P\big(\wh S^{N,x}_m\neq S^N_m \quad \text{for some $1\leq m\leq N$}\big)\Big]\\
		&\qquad\qquad -\P\bigl(\,\max_{1 \leq m\leq T_K} \wh S^{N,x}_m<  x\bigr)\\
		&\geq 1-\big[2 e^{-c_\tau N(\log N)^2 H_N^{-1}}+\Cm Ne^{-\frq (\log N)^2}\big]-C_{12}\tsp x\tsp(\log N)  H_N^{-1/2}\\
		&\geq 1-2 e^{-c_\tau (\log N)^2 }-\Cm Ne^{-\frq (\log N)^2}-C_{12}\tsp x\tsp(\log N)  H_N^{-1/2}\\
		&\ge 1-  (C_{12}+2)x(\log N)  H_N^{-1/2} . 
 	\end{align*}
	To get the inequalities above for $N\ge N_0$ 
we increase  $N_0$ if necessary  so that $N\ge N_0$ guarantees $(\log N)^{-1}  H_N^{1/2}\ge y_0$ to apply Lemma \ref{lem:ubc1}, and furthermore so that 
$2e^{-c_\tau (\log N)^2 } \vee \Cm Ne^{-\frq (\log N)^2} \le (\log N)^3 N^{-1/2}$ to get the last inequality.  	
	

Now the final inequality: 
	\begin{align*}
		\P\bigl(\max_{1\leq m \leq N}S^N_m\geq x\bigr)
		&\geq \P\bigl(\max_{1\leq m \leq N}\wh S^{N,x}_m\geq x\bigr) -\P\bigl(\wh S^{N,x}_m\neq S^N_m \quad \text{for some $1\leq m\leq N$}\bigr)\nonumber\\
		&\geq 1-(C_{12}+2)x(\log N)  H_N^{-1/2} -\Cm Ne^{-\frq (\log N)^2} \nonumber\\
		&\geq 1-(C_{12}+3)x(\log N)(|\mu|\vee N^{-1/2}). 
	\end{align*}
	Theorem \ref{thm:lm} has been proved. 
\end{proof}
\small

	\bibliography{Timo_old_bib}
\bibliographystyle{plain}
\end{document}